\theoremstyle{plain} %% This is the default
\newtheorem{theorem}{Theorem}[section]
\newtheorem{corollary}[theorem]{Corollary}
\newtheorem{lemma}[theorem]{Lemma}
\numberwithin{equation}{section}
\newcommand{\al}{\alpha}
\newcommand{\be}{\beta}
\newcommand{\br}{\mathbf {R}}
\newcommand{\bc}{\mathbf {C}}
\newcommand{\om}{\omega}
\newcommand{\ot}{\otimes}
\newcommand{\w}{\wedge}
\newcommand{\p} {\partial}
\newcommand{\g} {\mathfrak {g}}
\newcommand{\h} {\mathfrak {h}}
\newcommand{\sln}{\mathfrak {sl}}
\newcommand{\so}{\mathfrak {so}}
\newcommand{\s}{\mathfrak {sp}}
\begin{document}
\title{A Structure Theorem for Leibniz Homology}
\author{Jerry M. Lodder}
\date{}
\maketitle

\noindent
{\em Mathematical Sciences, Dept. 3MB  \\
Box 30001 \\ 
New Mexico State University \\
Las Cruces NM, 88003, U.S.A. }

\noindent
e-mail:  {\em jlodder@nmsu.edu}

\bigskip
\noindent
{\bf Abstract.}  
Presented is a structure theorem for the Leibniz homology, $HL_*$, 
of an Abelian extension of a simple real Lie algebra $\g$.  As applications,
results are stated for affine extensions of the classical Lie
algebras $\frak{sl}_n(\br)$, $\frak{so}_n(\br)$, and $\frak{sp}_n(\br)$.
Furthermore, $HL_*(\h)$ is calculated when $\h$ is the Lie algebra
of the Poincar\'e group as well as the Lie algebra of the affine 
Lorentz group.  The general theorem identifies all of these in terms 
of $\g$-invariants.

\bigskip
\noindent
{\bf Mathematics Subject Classifications (2000):}   17A32, 17B56 

\bigskip
\noindent
{\bf Key Words:}  Leibniz Homology, Extensions of Lie Algebras, 
Invariant Theory.

\section{Introduction}

For a (semi-)simple Lie algebra $\g$ over $\br$, the Milnor-Moore
theorem identifies the Lie algebra homology, $H^{\rm Lie}_*(\g ; \,
\br)$, as a graded exterior algebra on the primitive elements of 
$H^{\rm Lie}_*(\g ; \, \br)$, i.e., 
$$ H^{\rm Lie}_*(\g) \simeq \Lambda^*( \rm{Prim}(H_*(\g))), $$
where the coefficients are understood to be in the field $\br$.  
The algebra structure on $H^{\rm Lie}_*(\g)$ can be deduced from the
exterior product of $\g$-invariant cycles on the chain level, and
agrees with the corresponding Pontrajagin product induced from the Lie
group \cite{Koszul}.  For
Leibniz homology, however, we have $HL_n(\g) = 0$, $n \geq 1$, for
$\g$ simple with 
$\br$ coefficients \cite{Ntolo}.  Now, let $\g$ be a simple real Lie
algebra, $\h$ an extension of $\g$ by an Abelian ideal $I$:
$$  \CD 
0 @>>> I @>>> \h @>>> \g @>>> 0
\endCD $$
Then $\g$ acts on $I$, and this extends to a $\g$ action on $\Lambda^*(I)$ by
derivations.  For any $\g$-module $M$, let
$$  M^{\g} = \{ m \in M \ | \ [g, \, m] = 0 \ \ \forall \, g \in \g \}  $$ 
denote the submodule of $\g$-invariants.  Under a mild hypothesis, we
prove that
$$  HL_*(\h) \simeq [\Lambda^*(I)]^{\g} \otimes T(K_*),  $$
where 
$$  K_* = {\rm{Ker}}\big(H^{\rm{Lie}}_*(I; \, \h)^{\g} \to
H^{\rm{Lie}}_{*+1}(\h) \big),  $$ 
and $T(K_*) = \sum_{n \geq 0}K_*^{\otimes n}$ 
is the tensor algebra over $\br$.  Above, $H^{\rm{Lie}}_*(I; \, \h)$
denotes the Lie algebra homology of $I$ with coefficients in $\h$, and
the map $H^{\rm{Lie}}_*(I; \, \h) \to H^{\rm{Lie}}_{*+1}(\h)$ is the
composition
$$ \CD
H^{\rm{Lie}}_*(I; \, \h) @>{j_*}>> H^{\rm{Lie}}_*(\h; \, \h) @>{\pi_*}>> 
H^{\rm{Lie}}_{*+1}(\h),
\endCD $$
where $j_*$ is induced by the inclusion of Lie algebras $j: I
\hookrightarrow \h$, and $\pi_*$ is induced by the projection of chain
complexes
\begin{align*}
& \pi : \h \otimes \h^{\wedge n} \to \h^{\wedge (n+1)} \\
& \pi(h_0 \otimes h_1 \wedge h_1 \wedge \, \ldots \, \wedge h_n) = 
h_0 \wedge h_1 \wedge h_1 \wedge \, \ldots \, \wedge h_n .
\end{align*}
Of course, $K_*$ is computed from the module of $\g$-invariants,
beginning with $H^{\rm{Lie}}_*(I; \, \h)^{\g}$ as indicated above.

The main theorem is easily applied when $\g$ is a classical Lie algebra and
$\h$ is an affine extension of $\g$.  In the final section we state the
results for $\g$ being $\sln_{n} (\br)$, $\so_n(\br)$, $n$ odd or
even, and $I = {\br}^n$, 
whereby $\g$ acts on $I$ via matrix multiplication on vectors, which
is often called the standard representation.  For the (special) orthogonal
Lie algebra, $\frak{so}_n(\br)$, the general theorem agrees with calculations
of Biyogmam \cite{Biyogmam}.  When $\g = \s_n(\br)$
and $I = \br^{2n}$, we recover the author's previous result \cite{Lodder2}.  
Additionally, $HL_*(\h)$ is computed when $\h$ is the Lie algebra of
the Poincar\'e group and the Lie algebra of the affine Lorentz group.

\section{Preliminaries on Lie Algebra Homology}

For any Lie algebra $\g$ over a ring $k$, the Lie algebra homology of
$\g$, written $H^{\text{Lie}}_*(\g ; \, k)$, is the homology of the
chain complex $\Lambda^* (\g )$, namely
$$ \CD 
k @<0<< \g @<[\ , \ ]<< \g^{\wedge 2} @<<< \ldots @<<< \g^{\wedge (n-1)}
@<d<< \g^{\wedge n} @<<< \ldots,  
\endCD $$
where
\begin{align*}
& d(g_1 \wedge g_2 \wedge \, \ldots \, \wedge g_n) = \\
& \sum_{1 \leq i < j \leq n} (-1)^j \, (g_1 \wedge \, \ldots \, \wedge
g_{i-1} \wedge [g_i, \, g_j] \wedge g_{i+1} \wedge \, \ldots \, 
\hat{g}_j \,\ldots \, \wedge g_n). 
\end{align*}
In this paper $H^{\rm{Lie}}_*(\g)$ denotes homology with real
coefficients, where $k = \br$.  Lie algebra homology with
coefficients in the adjoint representation, $H^{\rm{Lie}}_*(\g ; \,
\g)$, is the homology of the chain
complex $\g \otimes \Lambda^*( \g )$, i.e., 
$$ \g \longleftarrow \g \ot \g \longleftarrow \g \ot \g^{\wedge 2} \longleftarrow
\ldots \longleftarrow \g \ot \g^{\wedge (n-1)} \,
{\overset{d}{\longleftarrow}} \,\g \ot \g^{\wedge n} \longleftarrow \ldots ,$$
where
\begin{align*}
& d(g_1 \ot g_2 \wedge g_3 \wedge \, \ldots \, \wedge g_{n+1}) = 
\sum_{i=2}^{n+1} (-1)^i \, ([g_1, \, g_i] \ot g_2 \wedge \, \ldots
  \, \hat{g}_i \, \ldots \, \wedge g_{n+1}) \\
& + \sum_{2 \leq i < j \leq n+1} (-1)^j \, (g_1 \ot g_2 \wedge \, \ldots \, \wedge
g_{i-1} \wedge [g_i, \, g_j] \wedge g_{i+1} \wedge \, \ldots \, 
\hat{g}_j \,\ldots \, \wedge g_{n+1}). 
\end{align*}
The canonical projection $\pi: \g \ot \Lambda^*(\g ) \to \Lambda^{*+1}(\g )$
given by $\g \ot \g^{\wedge n} \to \g^{\wedge (n+1)}$ is a map of
chain complexes, and induces a $k$-linear map on homology
$$ \pi_*: H^{\text{Lie}}_n(\g ; \, \g) \to H^{\text{Lie}}_{n+1}(\g ;\, k). $$
Let $HR_n(\g)$ denote the homology of the complex 
$$  CR_n(\g) = ({\text{Ker}}\, \pi)_n[1] = 
{\text{Ker}}\, [\g \ot \g^{\w (n+1)} \to \g^{\w (n+2)}], \ \ \ n 
\geq 0.  $$
There is a resulting long exact sequence
$$  \CD
\cdots @>{\delta^{\rm{Lie}}}>> HR_{n-1}(\g) @>>> H^{\text{Lie}}_n(\g ; \, \g) @>>>
H^{\text{Lie}}_{n+1}(\g) @>{\delta^{\rm{Lie}}}>>  \\ 
\cdots @>{\delta^{\rm{Lie}}}>> HR_0(\g) @>>> H^{\text{Lie}}_1(\g ; \, \g) @>>>
H^{\text{Lie}}_2(\g) @>{\delta^{\rm{Lie}}}>>  \\ 
@.  0 @>>> H^{\text{Lie}}_0(\g ; \, \g) @>>>
H^{\text{Lie}}_1(\g) @>>> 0.
\endCD  $$

Now let $\g$ be a simple real Lie algebra and $\h$ an
extension of $\g$ by an Abelian ideal $I$.  There is a short exact
sequence of real Lie algebras
$$ \CD 
0 @>>> I @>j>> \h @>{\rho}>> \g @>>> 0,
\endCD $$
where $j: I \to \h$ is an inclusion of Lie algebras, and 
$\rho : \h \to \h /I \simeq \g$ is a projection of Lie algebras.  For
$g \in \g$ and $a \in I$, the action of $\g$ on $I$ can be described
as
$$  [g, \ a] = j^{-1}([h, \ j(a)]),  $$
where $h \in \h$ is any element with $\rho(h) = g$.  The action is
well-defined.  

Conversely, given any representation $I$ of $\g$, such as the standard
representation of a classical real Lie algebra, then $\h$ can be
constructed as the linear span of all elements in $\g$ with all
elements in $I$.  Here $I$ is considered as an Abelian Lie algebra
with $[a, \ b] = 0$ for all $a$, $b \in I$.  Thus, in $\h$, we have
$$  [g_1 + a, \ g_2 + b] = [g_1, \ g_2] + [g_1,\ b] - [g_2,\ a]  $$
for $g_1$, $g_2 \in \g$.

\begin{lemma} \label{2.1}
Let $\g$ be a simple Lie algebra over $\br$, and let
$$ \CD 
0 @>>> I @>j>> \h @>{\rho}>> \g @>>> 0,
\endCD $$
be an Abelian extension of $\g$.  
There are natural vector space isomorphisms
\begin{align}
& H^{\rm{Lie}}_*(\h) \simeq [\Lambda^*(I)]^{\g} \ot H^{\rm{Lie}}_*(\g)\\
& H^{\rm{Lie}}_*(\h ; \, \h) \simeq [H^{\rm{Lie}}_*(I; \, \h)]^{\g} 
\ot H^{\rm{Lie}}_*(\g).
\end{align}
\end{lemma}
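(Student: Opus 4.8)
The plan is to reduce both isomorphisms to two standard consequences of the semisimplicity of $\g$, and then to exploit the hypothesis $[I,I]=0$ to split the relevant Chevalley--Eilenberg complexes. First I would record the two \emph{engines}. Since $\g$ is semisimple over $\br$, Whitehead's second lemma ensures that the Abelian extension splits as Lie algebras, so I may assume $\h=\g\ltimes I$, with $\g$ a subalgebra and $I$ an abelian ideal; in particular $\h=\g\oplus I$ as $\g$-modules. Second, complete reducibility together with the Casimir argument yields, for every finite-dimensional $\g$-module $V$, a natural isomorphism
$$H^{\rm{Lie}}_*(\g;V)\;\cong\;H^{\rm{Lie}}_*(\g)\ot V^{\g},$$
because $V=V^{\g}\oplus V'$ with $V'$ a sum of nontrivial irreducibles, $H^{\rm{Lie}}_*(\g;V')=0$, and $H^{\rm{Lie}}_*(\g;V^{\g})=H^{\rm{Lie}}_*(\g)\ot V^{\g}$ for the trivial module. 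Equivalently, the functor $(-)^{\g}$ is exact on finite-dimensional modules and commutes with $H^{\rm{Lie}}_*(\g;-)$.

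For the first isomorphism I would use the decomposition $\Lambda^*(\h)=\Lambda^*(\g)\ot\Lambda^*(I)$ and observe that, because $[I,I]=0$, $[\g,I]\subseteq I$, and $[\g,\g]\subseteq\g$, the Chevalley--Eilenberg differential $d$ never changes the number of tensor factors coming from $I$. Hence $\Lambda^*(\h)$ splits as a direct sum of subcomplexes $C^{(q)}=\big(\Lambda^*(\g)\ot\Lambda^q(I),\,d\big)$, and on each $C^{(q)}$ the differential is exactly the Chevalley--Eilenberg differential of $\g$ with coefficients in the module $\Lambda^q(I)$: the $[\g,\g]$--terms give the internal $\g$--differential, and the $[\g,I]$--terms give the derivation action on $\Lambda^q(I)$. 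Therefore $H^{\rm{Lie}}_n(\h)=\bigoplus_{p+q=n}H^{\rm{Lie}}_p(\g;\Lambda^q(I))$, and applying the engine with $V=\Lambda^q(I)$ and summing gives $H^{\rm{Lie}}_*(\h)\cong H^{\rm{Lie}}_*(\g)\ot[\Lambda^*(I)]^{\g}$.

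For the second isomorphism the same mechanism applies to the adjoint complex $\h\ot\Lambda^*(\h)$, where I would grade by the total $I$-weight $Q$, counting the factors from $I$ in $\Lambda^*(\h)$ plus one if the coefficient lies in $I$. A short check of the four bracket cases (using $[I,I]=0$) shows that $d$ preserves $Q$, so the complex splits into weight pieces $C^{(Q)}$, each of which is the Chevalley--Eilenberg complex of $\g$ with coefficients in the weight-$Q$ part of the complex $\h\ot\Lambda^*(I)$ that computes $H^{\rm{Lie}}_*(I;\h)$. Because $\g$ is semisimple, this bounded complex of finite-dimensional $\g$-modules is $\g$-equivariantly formal, i.e.\ isomorphic to its own homology $H^{\rm{Lie}}_*(I;\h)$ with zero differential; tensoring the splitting with $\Lambda^*(\g)$ and discarding the resulting acyclic summand reduces the computation to $\bigoplus_q H^{\rm{Lie}}_*(\g;H^{\rm{Lie}}_q(I;\h))$. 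Applying the engine once more yields $H^{\rm{Lie}}_*(\h;\h)\cong[H^{\rm{Lie}}_*(I;\h)]^{\g}\ot H^{\rm{Lie}}_*(\g)$.

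I expect the only real obstacle to be this last collapse in the second statement: one must rule out higher differentials mixing the $\g$-- and $I$--directions, which is precisely the degeneration of the Hochschild--Serre spectral sequence of the pair $(I,\h)$. The clean resolution is the formality of bounded complexes of modules over a semisimple Lie algebra, i.e.\ complete reducibility applied degreewise, after which the vanishing $H^{\rm{Lie}}_*(\g;V')=0$ for $V'$ without trivial summands forces everything onto the invariants. Finally I would note that the displayed isomorphisms are the natural edge maps of the spectral sequence, so the chosen splitting of $\h$ serves only to exhibit degeneration and does not affect naturality.
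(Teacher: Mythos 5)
Your proof is correct, but it reaches the lemma by a genuinely more self-contained route than the paper does. The paper's proof is essentially one line: apply the homological Hochschild--Serre spectral sequence to the subalgebra $\g$ of $\h$, invoke its degeneration when the quotient is semisimple, and note $H^{\rm{Lie}}_*(I)=\Lambda^*(I)$ since $I$ is Abelian; isomorphism (2.2) comes from the same spectral sequence with adjoint coefficients. What you do differently is to prove that degeneration by hand: after using Whitehead's second lemma to realize $\h=\g\ltimes I$ (a splitting the paper also tacitly uses when it treats $\g$ as a subalgebra of $\h$), you observe that $[I,I]=0$ forces the Chevalley--Eilenberg differential to preserve the $I$-weight, so $\Lambda^*(\h)$ and $\h\ot\Lambda^*(\h)$ split at the chain level into subcomplexes, each the complex of $\g$ with coefficients in a finite-dimensional $\g$-module (or, for (2.2), in a two-term complex of such modules); Whitehead's vanishing $H^{\rm{Lie}}_*(\g;V')=0$ together with semisimplicity of the module category (to split that two-term complex off from its homology equivariantly) then collapses everything onto invariants. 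Your route buys transparency---it exhibits the Hochschild--Serre filtration as actually split in this situation, so no convergence or degeneration theorem need be quoted---at the cost of length; the paper's route is shorter because the machinery is cited rather than re-derived. Two points to tighten: both arguments require $I$ finite dimensional (for Weyl complete reducibility and the Whitehead lemmas), a hypothesis the paper leaves implicit but you should state; and since your direct-sum decompositions depend on the chosen Levi splitting, your closing remark that the isomorphisms coincide with the natural filtration/edge maps is exactly what the claimed naturality needs, but it deserves a sentence of justification rather than an assertion.
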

\begin{proof}
Apply the homological version of the Hochschild-Serre spectral
sequence to the subalgebra $\g$ of $\h$ \cite{Lodder2}.  Then
$$ H^{\rm{Lie}}_* (\h) \simeq H^{\rm{Lie}}_*(I)^{\g} \ot 
H^{\rm{Lie}}_*(\g).  $$
Since $I$ is Abelian, $H^{\rm{Lie}}_*(I)^{\g} =
[\Lambda^*(I)]^{\g}$, and isomorphism (2.1) follows.  The spectral
sequence yields isomorphism (2.2) directly.  
Note that, since $I$ acts trivially on
$H^{\rm{Lie}}_*(I; \,h)$, we have 
$$ H^{\rm{Lie}}_*(I; \, \h)^{\g} = H^{\rm{Lie}}_*(I; \, \h)^{\h} $$
as well, yielding
$$ H^{\rm{Lie}}_*(\h ; \, \h) \simeq [H^{\rm{Lie}}_*(I; \, \h)]^{\h} 
\ot H^{\rm{Lie}}_*(\g). $$
Compare with Hochschild and Serre \cite{HS}.
\end{proof}

The natural inclusion $\g \hookrightarrow \h$ of Lie algebras leads to
a map of long exact sequences
$$ \CD
@>{\delta^{\rm{Lie}}}>> HR_{n-2}(\g) @>>> H_{n-1}^{\text{Lie}}(\g ; \, \g) @>>>
H_{n}^{\text{Lie}}(\g) @>{\delta^{\rm{Lie}}}>>  \\
@. @VVV  @VVV  @VVV  \\
@>{\delta^{\rm{Lie}}}>>  HR_{n-2}(\h) @>>> H_{n-1}^{\text{Lie}}(\h ; \, \h) @>>>
H_{n}^{\text{Lie}}(\h) @>{\delta^{\rm{Lie}}}>> \, ,
\endCD $$
where ${\delta}^{\rm{Lie}}$ is the connecting homomorphism.  For $\g$
simple, $H^{\rm{Lie}}_{n-1}(\g; \, \g) = 0$, $n \geq 1$ \cite{Hilton},
and 
$$  \delta^{\rm{Lie}}: H^{\rm{Lie}}_n (\g) \to HR_{n-3}(\g)  $$
is an isomorphism for $n \geq 3$.  Note that $H^{\rm{Lie}}_1(\g)
\simeq 0$ and $H^{\rm{Lie}}_2(\g) \simeq 0$.  The inclusion $j : I
\hookrightarrow \h$ is $\g$-equivariant and induces an endormorphism
$$  H^{\rm{Lie}}_*(I; \, \h)^{\g} \overset{j_*}{\longrightarrow} 
H^{\rm{Lie}}_*(\h; \, \h)^{\g} = H^{\rm{Lie}}_*(\h; \, \h).  $$
Recall that every element of $H^{\rm{Lie}}_*(\h; \, \h)$ can in fact be
represented by a $\g$-invariant cycle at the chain level.
Additionally, all elements of $H^{\rm{Lie}}_*(I; \, \h)^{\g}$ can be be
represented by $\g$-invariant cycles, although in general 
$H^{\rm{Lie}}_*(I; \, \h)^{\g}$ is not isomorphic to 
$H^{\rm{Lie}}_*(I; \, \h)$.  
Let $K_*$ be the kernel of the composition
\begin{align*}
& \pi_* \circ j_* : H^{\rm{Lie}}_n(I; \, \h)^{\g}
  \overset{j_*}{\longrightarrow} H^{\rm{Lie}}_n(\h; \, \h)
  \overset{\pi_*}{\longrightarrow} H^{\rm{Lie}}_{n+1}(\h), \\
& K_n = {\rm{Ker}}[H^{\rm{Lie}}_n(I; \, \h)^{\g} \longrightarrow
    H^{\rm{Lie}}_{n+1}(\h)], \ \ \ n \geq 0.
\end{align*}

\begin{theorem}  \label{2.2}
With $I$, $\h$ and $\g$ as in Lemma \eqref{2.1}, we have
$$  HR_n(\h) \simeq \delta^{\rm{Lie}}[H^{\rm{Lie}}_{n+3}(\g)] \oplus
\sum_{i=0}^{n+1}K_{n+1-i}\ot H^{\rm{Lie}}_i(\g), \ n \geq 0.  $$
\end{theorem}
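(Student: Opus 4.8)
The plan is to feed the extension into the long exact sequence relating $HR_*$, $H^{\rm{Lie}}_*(-;-)$ and $H^{\rm{Lie}}_*(-)$, and then to identify the two ends of the resulting short exact sequence using Lemma \eqref{2.1}. Specializing the long exact sequence to $\h$ and isolating the portion around $HR_n(\h)$,
\begin{equation*}
H^{\rm{Lie}}_{n+2}(\h;\h)\overset{\pi_*}{\longrightarrow}H^{\rm{Lie}}_{n+3}(\h)\overset{\delta^{\rm{Lie}}}{\longrightarrow}HR_n(\h)\longrightarrow H^{\rm{Lie}}_{n+1}(\h;\h)\overset{\pi_*}{\longrightarrow}H^{\rm{Lie}}_{n+2}(\h),
\end{equation*}
exactness yields a short exact sequence
\begin{equation*}
0\to{\rm{coker}}\big(\pi_*\colon H^{\rm{Lie}}_{n+2}(\h;\h)\to H^{\rm{Lie}}_{n+3}(\h)\big)\to HR_n(\h)\to{\rm{Ker}}\big(\pi_*\colon H^{\rm{Lie}}_{n+1}(\h;\h)\to H^{\rm{Lie}}_{n+2}(\h)\big)\to 0.
\end{equation*}
Since every vector space over $\br$ is free, this splits, so it remains to compute the kernel and the cokernel and match them with the two summands in the statement.

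The decisive step is to pin down $\pi_*$ under the isomorphisms of Lemma \eqref{2.1}. Writing $A_*=H^{\rm{Lie}}_*(I;\h)^{\g}$, $B_*=[\Lambda^*(I)]^{\g}$ and $C_*=H^{\rm{Lie}}_*(\g)$, these read $H^{\rm{Lie}}_*(\h;\h)\simeq A_*\ot C_*$ and $H^{\rm{Lie}}_*(\h)\simeq B_*\ot C_*$, and I would record them as isomorphisms of free $H^{\rm{Lie}}_*(\g)$-modules, the module structure coming from wedging invariant $\g$-forms into the $\Lambda^*$-factor along $\g\hookrightarrow\h$. Because the chain map $\pi$ commutes (up to sign) with wedging in such forms, $\pi_*$ is $H^{\rm{Lie}}_*(\g)$-linear and is therefore determined by its restriction to the generators $A_*=A_*\ot C_0$, which is exactly the map $\phi=\pi_*\circ j_*$. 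A priori $\phi$ could carry $A_p$ into $(B_{p+1}\ot C_0)\oplus(B_p\ot C_1)$ — the second term being the contribution of the $\g$-component of the adjoint coefficient — but $C_1=H^{\rm{Lie}}_1(\g)=0$ for $\g$ simple, so $\phi$ lands in $B_{p+1}$ and $\pi_*=\phi\ot{\rm{id}}_{C_*}$. Consequently ${\rm{Ker}}\,\pi_*=({\rm{Ker}}\,\phi)\ot C_*=K_*\ot C_*$, which in total degree $n+1$ is exactly $\sum_{i=0}^{n+1}K_{n+1-i}\ot H^{\rm{Lie}}_i(\g)$, the second summand.

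For the cokernel I would show that $\phi\colon A_{p-1}\to B_p$ is surjective for every $p\geq 1$, which is visible on the chain level: any invariant $b=a_0\w a_1\w\cdots\w a_{p-1}\in[\Lambda^p(I)]^{\g}$ is the image under $\pi$ of $a_0\ot a_1\w\cdots\w a_{p-1}$, and the latter is an $I$-cycle with coefficient in $I\subseteq\h$ (the relevant brackets $[a_i,a_0]$ vanish since $I$ is Abelian). Passing to $\g$-invariants, an exact functor since $\g$ is semisimple, produces the required preimage in $A_{p-1}$. Hence ${\rm{im}}\,\pi_*=\phi(A_*)\ot C_*$ exhausts the entire $I$-degree $\geq 1$ part of $H^{\rm{Lie}}_{n+3}(\h)$, and the cokernel is concentrated in $I$-degree $0$, namely $B_0\ot C_{n+3}=H^{\rm{Lie}}_{n+3}(\g)$; under $\delta^{\rm{Lie}}$ this is precisely $\delta^{\rm{Lie}}[H^{\rm{Lie}}_{n+3}(\g)]$, the first summand. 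Combining the two computations with the splitting of the short exact sequence gives the asserted isomorphism. I expect the main obstacle to be the decisive step of the second paragraph: verifying that $\pi_*$ is genuinely $H^{\rm{Lie}}_*(\g)$-linear and that the $\g$-coefficient ``leak'' is killed by the vanishing of $H^{\rm{Lie}}_1(\g)$, since this is exactly where the coefficient grading and the exterior grading interact and where the simplicity of $\g$ is used.
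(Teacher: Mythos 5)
Your proposal is correct, and its skeleton is the same as the paper's: specialize the long exact sequence relating $HR_*(\h)$, $H^{\rm{Lie}}_{*+1}(\h ; \, \h)$ and $H^{\rm{Lie}}_{*+2}(\h)$, then identify the kernel and cokernel of $\pi_*$ through the decompositions of Lemma \eqref{2.1}. The two sub-steps are carried out by genuinely different devices, though. For the surjectivity of $\phi = \pi_* \circ j_*$ onto $[\Lambda^p(I)]^{\g}$, $p \geq 1$, the paper constructs the explicit $\g$-equivariant chain section $\zeta(a_0 \w \cdots \w a_n) = \frac{1}{n+1}\sum_{i}(-1)^i a_i \ot a_0 \w \cdots \hat{a}_i \cdots \w a_n$ of $\pi$, whereas you apply exactness of $\g$-invariants to the $\g$-equivariant surjection of cycles $I \ot \Lambda^{p-1}(I) \to \Lambda^p(I)$. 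Both are valid (your phrase ``any invariant $b = a_0 \w \cdots \w a_{p-1}$'' should be read through that equivariant-surjection formulation, since invariants need not be decomposable --- which is exactly what your appeal to exactness supplies); note that the paper's $\zeta$ buys more, namely that $[\Lambda^{p}(I)]^{\g}$ is a \emph{direct summand} of $H^{\rm{Lie}}_{p-1}(I; \, \h)^{\g}$, a fact reused verbatim in Lemma \eqref{3.2}. For the other summand, the paper never takes a cokernel: it embeds $H^{\rm{Lie}}_{n+3}(\g)$ into $HR_n(\h)$ via the commutative square comparing $\delta^{\rm{Lie}}$ for $\g$ and for $\h$, using $H^{\rm{Lie}}_*(\g ; \, \g) = 0$ to see that $\delta^{\rm{Lie}} : H^{\rm{Lie}}_{n+3}(\g) \to HR_n(\g)$ is an isomorphism; your cokernel computation plus the splitting of the short exact sequence over $\br$ is an equivalent route.

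The one soft spot is the step you yourself single out: justifying $\pi_* = \phi \ot {\rm{id}}_{C_*}$ via an $H^{\rm{Lie}}_*(\g)$-module structure defined by chain-level wedging. As stated this does not quite work: wedging with a $\g$-invariant cycle $c \in [\Lambda^*(\g)]^{\g}$ is \emph{not} a chain map on $\Lambda^*(\h)$, because entries from $I$ bracket nontrivially into $c$ (the cross terms $\pm(\om \setminus a) \w {\rm{ad}}_a(c)$, $a \in I$, survive; $c$ is $\g$-invariant but not $\h$-invariant). The clean repair uses $[\g,\g] \subseteq \g$, $[\g, I] \subseteq I$, $[I,I] = 0$: the Chevalley--Eilenberg differential preserves $I$-degree, so $\Lambda^*(\h) \simeq \bigoplus_q \Lambda^*(\g) \ot \Lambda^q(I)$ splits as a direct sum of complexes of $\g$ with coefficients in $\Lambda^q(I)$ (and similarly for $\h \ot \Lambda^*(\h)$, grading by total $I$-degree); $\pi$ respects these splittings, and on the summands $\Lambda^*(\g) \ot [\Lambda^q(I)]^{\g}$ the differential is $d_{\g} \ot 1$, which yields both your linearity claim and your localization of the image of $\phi$ --- the $\g$-coefficient ``leak'' lands in $H_1(\g; \Lambda^p(I)) = H^{\rm{Lie}}_1(\g) \ot [\Lambda^p(I)]^{\g} = 0$, exactly your $C_1 = 0$ argument. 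This is a fixable gap rather than a wrong turn, and in fairness the paper's own proof leaves the same compatibility implicit in the phrase ``specific knowledge of the generators.''
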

\begin{proof}
The proof follows from the long exact sequence relating $HR_*(\h)$,
$H^{\rm{Lie}}_{*+1}(\h; \, \h)$ and $H^{\rm{Lie}}_{*+2}(\h)$ together
with a specific knowledge of the generators of the latter two homology
groups gleaned from Lemma \eqref{2.1}.  

Note that $H^{\rm{Lie}}_{n}(I; \, \h)^{\g}$ contains
$\Lambda^{n+1}(I)^{\g}$ as a direct summand, induced by a
$\g$-equivariant chain map
\begin{align*}
& \zeta : \Lambda^{n+1}(I) \to \h \ot I^{\w n} \\
& \zeta(a_0 \w a_1 \w \, \ldots \, \w a_n) = \frac{1}{n+1}\sum_{i=0}^n 
(-1)^i a_i \ot a_0 \w a_1 \w \, \ldots \, \hat{a}_i \, \ldots \, \w a_{n},
\end{align*}
where $a_i \in I$.  Then 
$$  \zeta_* : H^{\rm{Lie}}_{n+1}(I)^{\g} = \Lambda^{n+1}(I)^{\g} \to
H^{\rm{Lie}}_{n}(I; \, \h)^{\g}  $$
is an inclusion, since the composition
$$  \pi \circ \zeta : \Lambda^{n+1}(I) \to \h \ot I^{\w n} \to 
\h^{\w (n+1)}  $$
is the identity on $\Lambda^{n+1}(I)$.  Let
$\bar{\Lambda}^*(I)^{\g} = \sum_{k \geq 1}\Lambda^k(I)^{\g}.$
Thus, the morphism
$$  \pi_* : H^{\rm{Lie}}_{*}(\h ; \, \h) \to H^{\rm{Lie}}_{*+1}(\h)  $$
induces a surjection
$$ H^{\rm{Lie}}_{*}(\h ; \, \h) \to \bar{\Lambda}^*(I)^{\g} \ot 
H^{\rm{Lie}}_*(\g) $$
with kernel 
$$ \sum_{n \geq 0} \, \sum_{i=0}^{n+1}K_{n+1-i}\ot
H^{\rm{Lie}}_i(\g).  $$

There is also an inclusion $i_* : H^{\rm{Lie}}_*(\g) \to
H^{\rm{Lie}}_*(\h)$, and it follows that $H^{\rm{Lie}}_{n+3}(\g)$ maps
isomorphically to 
$i_* \circ \delta^{\rm{Lie}} [H^{\rm{Lie}}_{n+3}(\g)]$
in the commutative square
$$ \CD
@>>> H^{\rm{Lie}}_{n+3}(\g) @>{\delta^{\rm{Lie}}}>> HR_n(\g) @>>> \\
@. @V{i_*}VV  @VV{i_*}V  \\
@>>> H^{\rm{Lie}}_{n+3}(\h) @>{\delta^{\rm{Lie}}}>> HR_n(\h) @>>>
\endCD $$
Recall that $H^{\rm{Lie}}_{*}(\g; \, \g) = 0$,  $* \geq 0$, for $\g$ simple.
\end{proof}

\section{Leibniz Homology}

Returning to the general setting of any Lie algebra 
$\g$ over a ring $k$, we recall that the Leibniz homology 
\cite{LP} of $\g$,
written $HL_*(\g ; \, k)$, is the homology of the chain complex
$T(\g)$:
$$  \CD 
k @<0<< \g @<[\ , \ ]<< \g^{\ot 2} @<<< \ldots @<<< \g^{\ot (n-1)}
@<d<< \g^{\ot n} @<<< \ldots,  
\endCD $$
where 
\begin{align*}
& d(g_1, \, g_2, \, \ldots , \,  g_n) = \\
& \sum_{1 \leq i < j \leq n} (-1)^j \, (g_1, \, g_2, \, \ldots, \, 
g_{i-1}, \, [g_i, \, g_j], \, g_{i+1}, \, \ldots \, 
\hat{g}_j \,\ldots , \, g_n), 
\end{align*}
and $(g_1, \, g_2, \, \ldots, \, g_n)$ denotes the element 
$g_1 \ot g_2 \ot \, \ldots \, \ot g_n \in \g^{\ot n}$.  

The canonical projection $\pi' : \g^{\ot n} \to \g^{\w n}$, $n \geq 0$, 
is a map of chain complexes, $T(\g ) \to \Lambda^*(\g )$,  
and induces a $k$-linear map on homology
$$  HL_*(\g ; \, k) \to H^{\text{Lie}}_*(\g ; \, k).  $$
Letting
$$  ({\text{Ker}}\,\pi')_n [2] = {\text{Ker}}\, [\g^{\ot (n+2)} \to
    \g^{\w (n+2)}], \ \ \ n \geq 0,  $$
Pirashvili \cite{Pirashvili} defines the relative theory
$H^{\text{rel}}(\g)$ as the homology of the complex
$$  C^{\text{rel}}_n (\g) = ({\text{Ker}}\,\pi')_n [2],  $$
and studies the resulting long exact sequence relating Lie and Leibniz
homology:  
$$  \CD \label{4.1} 
\cdots @>{\delta}>> H^{\text{rel}}_{n-2}(\g) @>>>   HL_n(\g) @>>>
H^{\text{Lie}}_n(\g) @>{\delta}>>  H^{\text{rel}}_{n-3}(\g) @>>> \\
\cdots @>{\delta}>> H^{\text{rel}}_0(\g) @>>>
HL_2(\g) @>>> H^{\text{Lie}}_2(\g) @>>> 0 \\
@.  0 @>>> HL_1(\g) @>>> H^{\text{Lie}}_1(\g) @>>> 0 \\
@.  0 @>>> HL_0(\g) @>>> H^{\text{Lie}}_0(\g) @>>> \phantom{.}0. 
\endCD $$

The projection $\pi' : \g^{\ot (n+1)} \to \g^{\w (n+1)}$ can be
factored as the composition of projections
$$  \g^{\ot (n+1)} \longrightarrow \g \ot \g^{\w n}  
\longrightarrow \g^{\w (n+1)},  $$
which leads to a natural map between exact sequences
$$  \CD
H^{\rm{rel}}_{n-1}(\g)  @>>> HL_{n+1}(\g) @>>>
H^{\rm{Lie}}_{n+1}(\g)  @>{\delta}>> H^{\rm{rel}}_{n-2}(\g) \\
@VVV   @VVV   @V{\mathbf{1}}VV   @VVV  \\
HR_{n-1}(\g)  @>>> H^{\rm{Lie}}_n(\g ; \, \g)  @>>>
H^{\rm{Lie}}_{n+1}(\g)  @>{\delta^{\rm{Lie}}}>> HR_{n-2}(\g) 
\endCD  $$

A key technique in the calculation of Leibniz homology is the
Pirashvili spectral sequence \cite{Pirashvili}, which converges to the
relative groups $H^{\rm{rel}}_*$.  Consider the filtration of
$$ C^{\rm{rel}}_n(\g) = {\rm{Ker}}(\g^{\ot (n+2)} \to \g^{\w (n+2)}), 
\ \ \ n \geq 0,  $$
given by 
$$ \mathcal{F}_m^k(\g) = \g^{\ot k} \ot {\rm{Ker}}( \g^{\ot(m+2)} \to
\g^{\w (m+2)}), \ \ \ m \geq 0, \ k \geq 0. $$
Then $\mathcal{F}_{m-1}^*$ is a subcomplex of $\mathcal{F}_m^*$, and
\begin{align*}
E_{m, \, k}^0  & = {\mathcal{F}}^k_m / {\mathcal{F}}^{k+1}_{m-1} \\
& \simeq \g^k \ot {\rm{Ker}}(\g \ot \g^{\w (m+1)} \to \g^{\w (m+2)}) \\
& = \g^k \ot CR_m(\g).
\end{align*}
From \cite{Pirashvili}, we have
$$  E_{m, \, k}^2 \simeq HL_k(\g) \ot HR_m(\g), \ \ \ m \geq 0, \ k
\geq 0.  $$

\begin{lemma}  \label{3.1}
Let $0 \to I \to \h \to \g \to 0$ be an Abelian
  extension of a simple real Lie algebra $\g$.  Then there is a
  natural injection
$$  \epsilon_* : H^{\rm{Lie}}_{*}(I; \, \h)^{\g} \to HL_{*+1}(\h)  $$ 
induced by a $\g$-equivariant chain map
$$ \epsilon_n : \h \ot \Lambda^n(I) \to \h^{\ot (n+1)}, \ n \geq 0.  $$
\end{lemma}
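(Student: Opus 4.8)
The plan is to exhibit $\epsilon_n$ as a normalized antisymmetrization in the $I$-slots, verify directly that it is a chain map landing in a convenient subcomplex, and then establish injectivity by producing a chain-level left inverse on that subcomplex and promoting it to the whole Leibniz complex via the $\g$-module splitting supplied by Weyl's theorem. Concretely I would set
$$\epsilon_n(h_0\ot a_1\w\ld\w a_n)=\frac{1}{n!}\sum_{\sigma\in\mathfrak S_n}{\rm sgn}(\sigma)\,h_0\ot a_{\sigma(1)}\ot\ld\ot a_{\sigma(n)},$$
for $h_0\in\h$ and $a_i\in I$. Its image lies in $S_n:=\h\ot I^{\ot n}\subseteq\h^{\ot(n+1)}$. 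Since $I$ is an abelian ideal we have $[a_i,a_j]=0$ and $[\h,I]\subseteq I$, so on $S_*$ the Leibniz differential collapses to $d(h_0\ot a_1\ot\ld\ot a_n)=\sum_{k=1}^{n}(-1)^{k+1}[h_0,a_k]\ot a_1\ot\ld\widehat{a_k}\ld\ot a_n$; in particular $S_*$ is a subcomplex of $T(\h)$. A short computation then gives $d\circ\epsilon_n=\epsilon_{n-1}\circ d$, where the left $d$ is the Leibniz differential and the right $d$ is the Chevalley--Eilenberg differential of $\h\ot\Lambda^*(I)$ computing $H^{\rm Lie}_*(I;\h)$; the factor $1/n!$ is exactly what reconciles the $n!$ permuted Leibniz terms with the single antisymmetric Lie term.

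The algebra $\g$ acts diagonally by the adjoint action on both $\h\ot\Lambda^*(I)$ and $T(\h)$, and antisymmetrization is $\mathfrak S_n$-equivariant and commutes with this diagonal action, so $\epsilon_n$ is $\g$-equivariant. Because the elements of $\g\subseteq\h$ act on $HL_*(\h)$ by inner derivations, which induce the zero map on Leibniz homology, the adjoint $\g$-action on $HL_*(\h)$ is trivial; hence $\epsilon$ descends to a well-defined map $\epsilon_*\colon H^{\rm Lie}_*(I;\h)^{\g}\to HL_{*+1}(\h)$ on $\g$-invariant classes.

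For injectivity I would first record a chain-level retraction of $S_*$ onto the Lie complex. Define $r_n\colon S_n=\h\ot I^{\ot n}\to\h\ot\Lambda^n(I)$ by $r_n(h_0\ot a_1\ot\ld\ot a_n)=h_0\ot a_1\w\ld\w a_n$. Comparing the reduced Leibniz differential above with the Chevalley--Eilenberg differential shows that $r$ is a chain map, and $r_n\circ\epsilon_n={\rm id}$ follows at once from $\sum_{\sigma\in\mathfrak S_n}{\rm sgn}(\sigma)^2=n!$. Thus $\epsilon_*$ is already a split injection into $H_*(S_*)$.

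Finally comes the main obstacle: promoting this split injection into $H_*(S_*)$ to an injection into all of $HL_{*+1}(\h)=H_*(T(\h))$, that is, showing the inclusion $S_*\hookrightarrow T(\h)$ is injective on the image of $\epsilon_*$. Here I would invoke Weyl complete reducibility: since $\g$ is semisimple and $I,\h$ are finite-dimensional, the extension splits as $\g$-modules, $\h\simeq\g\oplus I$. This furnishes an $I$-degree grading on each $\h^{\ot m}$ (the number of tensor factors lying in $I$), preserved by the Leibniz differential because $[\g,\g]\subseteq\g$, $[\g,I]\subseteq I$ and $[I,I]=0$; hence $T(\h)$, and with it $HL_*(\h)$, splits over $I$-degree, and the image of $\epsilon$ meets only $I$-degrees $n$ and $n+1$. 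A class $\epsilon_*(x)$ can vanish in $HL_{*+1}(\h)$ only through a $\g$-invariant, $I$-degree-homogeneous Leibniz boundary; and since $S_*$ consists precisely of those tensors whose $\g$-factor, if any, occupies the first slot, such a boundary would have to originate from tensors carrying a $\g$-factor in a later slot. Proving that no such $\g$-invariant boundary can cancel a nonzero $\epsilon_*(x)$ is the delicate point, and is exactly where the simplicity of $\g$ — through the vanishing $H^{\rm Lie}_*(\g;\g)=0$ and the structure of $HR_*(\h)$ recorded in Theorem \eqref{2.2} — must be brought to bear.
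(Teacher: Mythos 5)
Your construction of $\epsilon_n$, the chain-map verification, and the $\g$-equivariance argument all coincide with the paper's proof. The gap is in the injectivity step, and you have in effect named it yourself: the retraction $r_n$ only shows that $\epsilon_*$ is injective into $H_*(S_*)$, the homology of the subcomplex $S_* = \h \ot I^{\ot *}$, and nothing you say rules out that a class in the image of $\epsilon_*$ becomes a boundary in the full Leibniz complex $T(\h)$, i.e.\ a boundary of a chain carrying $\g$-factors in later tensor slots. Your $I$-degree grading does not exclude this: the grading is indeed preserved by the Leibniz differential, but a boundary of the same $I$-degree as $\epsilon_*(x)$ can perfectly well originate outside $S_*$. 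Your final paragraph is a statement of the remaining problem, not a proof of it, so the lemma is not established.

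The missing idea, which is how the paper closes the argument, is to map \emph{forward} out of $T(\h)$ rather than to split backward onto $S_*$: compose $\epsilon_n$ with the canonical projection $\pi' : \h^{\ot (n+1)} \to \h \ot \h^{\w n}$ onto the complex computing $H^{\rm{Lie}}_*(\h ; \, \h)$. One checks that $\pi' \circ \epsilon_n$ is the identity on $\h \ot \Lambda^n(I)$, i.e.\ it is the natural inclusion of chain complexes $\h \ot \Lambda^*(I) \hookrightarrow \h \ot \Lambda^*(\h)$. By Lemma \eqref{2.1}, $H^{\rm{Lie}}_*(\h ; \, \h) \simeq [H^{\rm{Lie}}_*(I; \, \h)]^{\g} \ot H^{\rm{Lie}}_*(\g)$, and this inclusion induces on $\g$-invariant homology exactly the inclusion of the direct summand $[H^{\rm{Lie}}_*(I; \, \h)]^{\g} \ot H^{\rm{Lie}}_0(\g)$; in particular $(\pi' \circ \epsilon)_*$ is injective, and therefore so is $\epsilon_*$, since it is the first factor of an injective composite. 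This one-line factorization replaces your entire last paragraph: no analysis of potentially cancelling boundaries, and no appeal to $H^{\rm{Lie}}_*(\g ; \, \g) = 0$ or to Theorem \eqref{2.2}, is needed.
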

\begin{proof}
For $b \in \h$, $a_i \in I$, $i = 1, \ 2, \, \ldots \, n$, define
$$ \epsilon_n(b \ot a_1 \w a_2 \w \, \ldots \, \w a_n) =
\frac{1}{n!} \sum_{\sigma \in S_n}{\rm{sgn}}(\sigma) \, 
b \ot a_{\sigma(1)} \ot a_{\sigma(2)} \ot \, \ldots \, \ot
a_{\sigma(n)}.  $$
Since $[a_i , \ a_j] = 0$ for $a_i$, $a_j \in I$, it follows that
$$ d_{\rm{Lieb}} \circ \epsilon_n = \epsilon_{n-1} \circ
d_{\rm{Lie}}.  $$
Also, $\epsilon_n$ is $\g$-equivariant, since $\g$ acts by derivations
on both $\h \ot \Lambda^n(I)$ and $\h^{\ot (n+1)}$.  Thus, there is an
induced map
$$  \epsilon_* : H^{\rm{Lie}}_*(I ; \,\h)^{\g} \to HL_{*+1}(\h)^{\g} =
HL_{*+1}(\h).  $$
The composition
$$  \pi' \circ \epsilon_n : \h \ot \Lambda^n(I) \to \h^{\ot (n+1)} \to
\h \ot \h^{\w n}  $$
is the identity on $\h \ot \Lambda^n(I)$.  Since $H^{\rm{Lie}}_*(\h ; \,
\h)$ contains $H^{\rm{Lie}}_*(I; \, \h)^{\g}$ as a direct summand via
$H^{\rm{Lie}}_*(I; \, \h)^{\g} \ot H^{\rm{Lie}}_0(\g)$ (see Lemma
\eqref{2.1}), it follows that $(\pi' \circ \epsilon)_*$ and $\epsilon_*$
are injective.
\end{proof}
\begin{lemma} \label{3.2}
With $I$, $\h$, $\g$ as in Lemma \eqref{3.1}, there is a vector space 
splitting (that is a splitting of trivial $\g$-modules)
$$  H^{\rm{Lie}}_n(I; \, \h)^{\g} \simeq [\Lambda^{n+1}(I)]^{\g} \oplus
K_n,  $$
where
$$  K_n = {\rm{Ker}}[H^{\rm{Lie}}_n(I; \, \h)^{\g} \to
  H^{\rm{Lie}}_{n+1}(\h)].  $$
\end{lemma}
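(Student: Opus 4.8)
The plan is to produce an explicit complement to $K_n$ inside $H^{\rm{Lie}}_n(I;\,\h)^{\g}$ by showing that the defining map $\phi := \pi_* \circ j_* : H^{\rm{Lie}}_n(I;\,\h)^{\g} \to H^{\rm{Lie}}_{n+1}(\h)$ carries a canonical copy of $[\Lambda^{n+1}(I)]^{\g}$ isomorphically onto its entire image. First I would import from the proof of Theorem \eqref{2.2} the $\g$-equivariant inclusion $\zeta_* : [\Lambda^{n+1}(I)]^{\g} \to H^{\rm{Lie}}_n(I;\,\h)^{\g}$. Because $\pi \circ \zeta$ is the identity on $\Lambda^{n+1}(I)$, a one-line chain computation gives $\phi(\zeta_*(a_0 \w \cdots \w a_n)) = [a_0 \w \cdots \w a_n]$, the class of this element read as a cycle in $\Lambda^{n+1}(I) \subseteq \Lambda^{n+1}(\h)$. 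Under the decomposition of Lemma \eqref{2.1}, $H^{\rm{Lie}}_{n+1}(\h) \simeq \bigoplus_{p+q=n+1} [\Lambda^q(I)]^{\g} \ot H^{\rm{Lie}}_p(\g)$, this class sits in the $p=0$ summand $[\Lambda^{n+1}(I)]^{\g} \ot H^{\rm{Lie}}_0(\g)$, so that $\phi \circ \zeta_*$ is the canonical inclusion of $[\Lambda^{n+1}(I)]^{\g}$ as that summand. In particular $\phi \circ \zeta_*$ is injective, whence $\zeta_*([\Lambda^{n+1}(I)]^{\g}) \cap K_n = 0$ and the sum $\zeta_*([\Lambda^{n+1}(I)]^{\g}) + K_n$ is direct.

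It remains to see that this direct sum fills out all of $H^{\rm{Lie}}_n(I;\,\h)^{\g}$. Since $\phi \circ \zeta_*$ is injective, elementary linear algebra shows that $\zeta_*([\Lambda^{n+1}(I)]^{\g}) \oplus K_n = H^{\rm{Lie}}_n(I;\,\h)^{\g}$ precisely when ${\rm{Im}}\,\phi = {\rm{Im}}(\phi \circ \zeta_*) = [\Lambda^{n+1}(I)]^{\g} \ot H^{\rm{Lie}}_0(\g)$; that is, when $\phi$ acquires no image beyond the $p=0$ summand. This single containment ${\rm{Im}}\,\phi \subseteq [\Lambda^{n+1}(I)]^{\g} \ot H^{\rm{Lie}}_0(\g)$ is the step I expect to be the main obstacle, and I would establish it through the Hochschild--Serre filtration of $\Lambda^*(\h)$ by $\g$-degree, i.e. by the number of wedge factors lying in a fixed vector-space complement of $I$ in $\h$. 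Representing a class $x$ by a cycle $c = \sum_k b_k \ot \om_k \in \h \ot \Lambda^n(I)$, and noting that each $\om_k$ already lies in $\Lambda^n(I)$ while each coefficient $b_k \in \h$ contributes at most one $\g$-factor, I see that $\pi(c) = \sum_k b_k \w \om_k$ has $\g$-degree at most one. Hence $\phi(x) = [\pi(c)]$ lies in the first filtration stage $F_1 H^{\rm{Lie}}_{n+1}(\h)$.

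The decisive point is then that the top graded piece vanishes. In total degree $n+1$ the spectral sequence degenerates (by Lemma \eqref{2.1}), so $F_1/F_0 \simeq E^\infty_{1,n} \simeq E^2_{1,n} = H^{\rm{Lie}}_1(\g) \ot [\Lambda^n(I)]^{\g}$, and $H^{\rm{Lie}}_1(\g) = 0$ for $\g$ simple forces this to be zero. Therefore $F_1 H^{\rm{Lie}}_{n+1}(\h) = F_0 H^{\rm{Lie}}_{n+1}(\h) = [\Lambda^{n+1}(I)]^{\g} \ot H^{\rm{Lie}}_0(\g)$, so $\phi(x)$ lands in the $p=0$ summand and ${\rm{Im}}\,\phi = [\Lambda^{n+1}(I)]^{\g} \ot H^{\rm{Lie}}_0(\g)$. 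Combining the two steps, $\phi$ carries $\zeta_*([\Lambda^{n+1}(I)]^{\g})$ isomorphically onto its image, giving
$$ H^{\rm{Lie}}_n(I;\,\h)^{\g} = \zeta_*\big([\Lambda^{n+1}(I)]^{\g}\big) \oplus K_n \simeq [\Lambda^{n+1}(I)]^{\g} \oplus K_n. $$
Finally, since the entire argument takes place inside the $\g$-invariants, on which $\g$ acts trivially, both summands are trivial $\g$-modules and the splitting is automatically one of $\g$-modules, as claimed. Everything outside the middle filtration estimate is formal, the one genuine input being the vanishing $H^{\rm{Lie}}_1(\g) = 0$.
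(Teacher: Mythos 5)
Your proof is correct and follows essentially the same route as the paper: the splitting injection is the $\g$-equivariant chain map $\zeta$ from Theorem \eqref{2.2} (split by $\pi\circ\zeta = {\rm{id}}$), and Lemma \eqref{2.1} identifies $[\Lambda^{n+1}(I)]^{\g}$ as the relevant summand of $H^{\rm{Lie}}_{n+1}(\h)$, with $K_n$ the complementary kernel. The only difference is completeness: the paper's terse proof leaves the key containment ${\rm{Im}}(\pi_*\circ j_*) \subseteq [\Lambda^{n+1}(I)]^{\g} \ot H^{\rm{Lie}}_0(\g)$ implicit (it is asserted, in equivalent form, in the proof of Theorem \eqref{2.2}), whereas your Hochschild--Serre filtration estimate, resting on $H^{\rm{Lie}}_1(\g)=0$, actually proves it, making your write-up self-contained where the paper is not.
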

\begin{proof}
The proof begins with the $\g$-equivariant chain map
$$  \zeta : \Lambda^{n+1}(I) \to \h \ot I^{\w n} $$
constructed in Theorem \eqref{2.2}.  
Recall that $K_n$ is
defined as the kernel of the composition
$$ H^{\rm{Lie}}_n(I; \, \h)^{\g} \to H^{\rm{Lie}}_n(\h ; \, \h) \to
H^{\rm{Lie}}_{n+1}(\h).  $$
Note that $H^{\rm{Lie}}_n(\h ; \, \h)$ contains $H^{\rm{Lie}}_n(I; \,
\h)^{\g}$ as a summand from Lemma \eqref{2.1}.    
\end{proof}
 
To begin the calculation of the differentials in the Pirashvili
spectral sequence converging to $H^{\rm{rel}}(\h)$, first consider the 
spectral sequence converging to $H^{\rm{rel}}(\g)$, where $\g$ is simple.
We have $H^{\rm{Lie}}_n(\g ; \, \g) = 0$ for $n \geq 0$
from \cite{Hilton} and $HL_n(\g) = 0$ for $n \geq 1$ from
\cite{Ntolo}.  It follows that $\delta^{\rm{Lie}}:
H^{\rm{Lie}}_{n+3}(\g) \to HR_n(\g)$ and $\delta : H^{\rm{Lie}}_{n+3}(\g)
\to H^{\rm{rel}}_n(\g)$, $n \geq 0$, are isomorphisms in the square
$$ \CD
@>>> H^{\rm{Lie}}_{n+3}(\g) @>{\delta}>> H^{\rm{rel}}_n(\g) @>>> \\
@. @V{\bf{1}}VV  @VV{\bf{1}}V  \\
@>>> H^{\rm{Lie}}_{n+3}(\g) @>{\delta^{\rm{Lie}}}>> HR_n(\g) @>>> 
\endCD $$
\begin{lemma} In the Pirashvili spectral sequence converging to
  $H^{\rm{rel}}_*(\g)$ for a simple real Lie algebra $\g$, all higher
  differentials 
$$ d^r :E^r_{m, \, k} \to E^r_{m-r, \, k+r-1} , \ \ \ r \geq 2, $$
are zero.
\end{lemma}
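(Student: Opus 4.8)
The plan is to exploit the fact that the $E^2$-page is concentrated in a single row, so that every higher differential is forced to have either zero source or zero target. This is a pure collapse argument, and almost all the content is carried by the vanishing theorem cited just before the statement.

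First I would recall the identification $E^2_{m,k} \simeq HL_k(\g) \ot HR_m(\g)$ for $m, k \geq 0$, taken from Pirashvili. The decisive input is Ntolo's result: for $\g$ simple over $\br$ one has $HL_k(\g) = 0$ for all $k \geq 1$, while $HL_0(\g) \simeq \br$. Substituting this into the tensor product gives $E^2_{m,k} = 0$ whenever $k \geq 1$, and $E^2_{m,0} \simeq HR_m(\g)$. Thus the entire $E^2$-page, and consequently every later page $E^r$ (each being a subquotient of $E^2$), is supported on the single row $k = 0$.

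Next I would read off the effect of a higher differential on the bigrading. The differential $d^r : E^r_{m,k} \to E^r_{m-r,\, k+r-1}$, $r \geq 2$, raises the complementary degree $k$ by $r - 1 \geq 1$. For a source $E^r_{m,k}$ to be nonzero the concentration forces $k = 0$; but then the target $E^r_{m-r,\, r-1}$ sits in the row $k = r - 1 \geq 1$, which is zero. Conversely, a nonzero target would require $k + r - 1 = 0$, i.e.\ $k = 1 - r < 0$, which is impossible for $k \geq 0$ and $r \geq 2$. Hence source and target can never both be nonzero, and every $d^r$ with $r \geq 2$ vanishes.

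I do not expect any genuine obstacle here: the claim is the standard degeneration of a one-row spectral sequence, and the only point to verify with care is that the grading conventions are matched so that $d^r$ really moves off the row $k = 0$ (which it does, since it increments $k$ by $r-1$). The mathematical substance is entirely supplied by Ntolo's vanishing $HL_k(\g) = 0$ for $k \geq 1$.
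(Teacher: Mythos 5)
Your proposal is correct and follows essentially the same route as the paper: both invoke Pirashvili's identification $E^2_{m,k} \simeq HL_k(\g) \ot HR_m(\g)$ together with Ntolo's vanishing $HL_k(\g) = 0$ for $k \geq 1$ to concentrate the page on the row $k = 0$, whence all higher differentials vanish. The only difference is cosmetic: you spell out the bidegree bookkeeping that kills the differentials, while the paper leaves it implicit and additionally records the identification $HR_m(\g) \simeq \delta^{\rm{Lie}}[H^{\rm{Lie}}_{m+3}(\g)]$, which it needs not for this lemma but for the naturality argument that follows.
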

\begin{proof}
Since $E^2_{m, \, k} \simeq HL_k(\g) \ot HR_m (\g)$, and $HL_k(\g) = 0$
for $k \geq 1$, it is enough to consider
$$  E^2_{m, \, 0} \simeq \br \ot HR_m(\g) \simeq
\delta^{\rm{Lie}}[H^{\rm{Lie}}_{m+3}(\g)],  $$
for which $d^r[E^r_{m, \, 0}] = 0$, $r \geq 2$.
\end{proof}

By naturality of the Pirashvili spectral sequence, 
$$  \mathcal{F}^*_m(\g) \hookrightarrow \mathcal{F}^*_m(\h).  $$
Thus, in the spectral sequence converging to $H^{\rm{rel}}_*(\h)$, 
we also have
$$  d^r[\delta^{\rm{Lie}}[H^{\rm{Lie}}_{m+3}(\g)] = 0, \ \ \ r \geq
  2.  $$

Due to the recursive nature of $H^{\rm{rel}}_*(\h)$ with 
$E^2_{m, \, k} \simeq HL_k(\h) \ot HR_m(\h)$, we calculate $HL_n(\h)$
by induction on $n$.  We have immediately
\begin{align*}
&HL_0(\h) \simeq \br \\
&HL_1(\h) \simeq H^{\rm{Lie}}_1(\h) \simeq I^{\g} \simeq H_0(I; \,
  \h)^{\g} \\
&HL_2(\h) \simeq H^{\rm{Lie}}_1(\h ; \, \h) \simeq H^{\rm{Lie}}_1(I;
  \, \h)^{\g}.
\end{align*}
Elements in 
$$  HL_1(\h) \ot HR_0(\h) + HL_0(\h) \ot HR_1(h)  $$
determine $H^{\rm{rel}}_1(\h)$, which then maps to $HL_3(\h)$, etc.
We now construct the differentials in the Pirashvili spectral 
sequence.

\begin{lemma}  \label{PSS-1}
In the Pirashvili spectral sequence converging to
$H^{\rm{rel}}_*(\h)$, the differential 
$$ d^r[E^2_{m, \, 0}] = d^r[HL_0(\h) \otimes HR_m(\h)] = d^r[{\br}
  \ot HR_m(\h)]  $$ 
is given by
\begin{align*}
& d^r[\delta^{\rm{Lie}}(H^{\rm{Lie}}_{m+3}(\g))] = 0, \ \ \ r \geq 2,\\
& d^r[K_{m+1}] = 0, \ \ \ r \geq 2, \\
& d^{m+3-p}: K_{m+1-p} \ot H^{\rm{Lie}}_p(\g) \to K_{m+1-p} \ot 
\delta^{\rm{Lie}}[H^{\rm{Lie}}_p(\g)], \ {\rm{where}} \\
& K_{m+1-p} \ot \delta^{\rm{Lie}}[H^{\rm{Lie}}_p(\g)] \subseteq
HL_{m+2-p}(\h) \ot HR_{p-3}(\h) \subseteq E^2_{p-3, \, m+2-p}, \\
& d^{m+3-p}(\om_1 \ot \om_2) = \om_1 \ot \delta(\om_2).  
\end{align*}
\end{lemma}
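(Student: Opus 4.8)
The plan is to compute the higher differentials on the bottom row $E^2_{m,\,0}\simeq\br\ot HR_m(\h)$ by dissecting $HR_m(\h)$ via Theorem \eqref{2.2},
$$ HR_m(\h)\simeq \delta^{\rm{Lie}}[H^{\rm{Lie}}_{m+3}(\g)]\oplus \bigoplus_{p=0}^{m+1}K_{m+1-p}\ot H^{\rm{Lie}}_p(\g), $$
and treating the three kinds of summand separately using chain-level representatives built from the maps $\zeta$ of Theorem \eqref{2.2} and $\epsilon$ of Lemma \eqref{3.1}. First I would dispose of the pure-$\g$ summand $\delta^{\rm{Lie}}[H^{\rm{Lie}}_{m+3}(\g)]$. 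It is the image under the inclusion $\g\hookrightarrow\h$ of the corresponding bottom-row class in the Pirashvili spectral sequence for $\g$; by the naturality $\mathcal{F}^*_m(\g)\hookrightarrow\mathcal{F}^*_m(\h)$ already recorded above, together with the fact that the $\g$ spectral sequence is concentrated on its bottom row (since $HL_k(\g)=0$ for $k\geq 1$), all higher differentials vanish on this summand, giving $d^r[\delta^{\rm{Lie}}(H^{\rm{Lie}}_{m+3}(\g))]=0$ for $r\geq 2$.

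Next I would treat the summand $K_{m+1}$, the index $p=0$ term, by exhibiting it as a permanent cycle. Representing $\om_1\in K_{m+1}$ by a $\g$-invariant cycle $\al\in\h\ot\Lambda^{m+1}(I)$, Lemma \eqref{3.1} shows $\epsilon_{m+1}(\al)$ is a genuine cycle for the full Leibniz differential $d_{\rm{Lieb}}$, so $\epsilon_*(\om_1)$ is a nonzero class in $HL_{m+2}(\h)$ whose image in $H^{\rm{Lie}}_{m+2}(\h)$ vanishes by definition of $K_{m+1}$. The Lie--Leibniz long exact sequence then produces a class in $H^{\rm{rel}}_m(\h)$ lifting $\epsilon_*(\om_1)$ and detected in bottom filtration, so $\om_1$ survives all higher differentials and $d^r[K_{m+1}]=0$ for $r\geq 2$. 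This is also the degenerate $p=0$ instance of the formula below, since the relative connecting map satisfies $\delta=0$ on $H^{\rm{Lie}}_0(\g)\simeq\br$.

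The heart of the argument is the remaining type $K_{m+1-p}\ot H^{\rm{Lie}}_p(\g)$ with $p\geq 3$ (the cases $p=1,2$ being absent since $H^{\rm{Lie}}_p(\g)=0$). I would represent such a class by a product chain $z=\al\w\be\in\h\ot\h^{\w(m+1)}\subseteq CR_m(\h)$, where $\al\in\h\ot\Lambda^{m+1-p}(I)$ is a $\g$-invariant cycle representing $\om_1\in K_{m+1-p}$ and $\be\in\g^{\w p}$ is a $\g$-invariant Lie cycle representing $\om_2\in H^{\rm{Lie}}_p(\g)$. Running the zig-zag for $d^r$, the $\g$-invariance of both factors controls the cross terms $[\g,\,I]$, while the $I$-part contributes nothing: by Lemma \eqref{3.1} the $\al$-factor is Leibniz-inert and, when pushed into the tensor slots, is identified through $\epsilon_*$ with $\om_1\in K_{m+1-p}\subseteq HL_{m+2-p}(\h)$. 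The sole obstruction comes from $\be$, whose failure to be a Leibniz cycle is measured by $\delta:H^{\rm{Lie}}_p(\g)\to H^{\rm{rel}}_{p-3}(\g)$. Tracking filtration, the $\be$-factor resolves down to $\delta^{\rm{Lie}}[H^{\rm{Lie}}_p(\g)]\subseteq HR_{p-3}(\h)$, a drop of $m-(p-3)=m+3-p$, while its inert partner $\om_1$ is transferred into the $HL$-direction. This yields $d^{m+3-p}(\om_1\ot\om_2)=\om_1\ot\delta(\om_2)$ landing in $K_{m+1-p}\ot\delta^{\rm{Lie}}[H^{\rm{Lie}}_p(\g)]\subseteq HL_{m+2-p}(\h)\ot HR_{p-3}(\h)=E^2_{p-3,\,m+2-p}$, and shows the earlier differentials $d^r$, $2\le r<m+3-p$, vanish on this summand because $\be$ survives until the length-$3$ resolution in $\g$ is forced.

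The step I expect to be the main obstacle is this last one: proving rigorously that the zig-zag on $z=\al\w\be$ factors as the identity on the $K$-factor tensored with the connecting map $\delta$ on the $\g$-factor. This is a compatibility between the shuffle/product structure on the relative complex and the spectral-sequence differential, together with the bookkeeping that moves the inert factor $\om_1$ from the $HR$-slot into the $HL$-slot as $\be$ resolves. I would secure it by combining the explicit chain homotopies implicit in $\zeta$ and $\epsilon$ with naturality of the $\g$ spectral sequence, invoking the identification $\delta\simeq\delta^{\rm{Lie}}$ for $\g$ simple established just above the lemma, which is what lets the target be written with $\delta^{\rm{Lie}}$ while the formula records the value via $\delta$.
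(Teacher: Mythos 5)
Your proposal is correct and takes essentially the same route as the paper: the same decomposition of $HR_m(\h)$ from Theorem \eqref{2.2}, naturality of the Pirashvili spectral sequence to kill the differentials on $\delta^{\rm{Lie}}[H^{\rm{Lie}}_{m+3}(\g)]$, the $\epsilon_*$-cycles of Lemma \eqref{3.1} to show $K_{m+1}$ consists of permanent cycles, and for the mixed summand a chain-level product representative (a $\g$-invariant $\epsilon$-cycle for $\om_1$ tensored with the tensor-algebra lift $\tilde\om_2$ of the $\g$-factor) whose Leibniz boundary realizes $\om_1 \ot \delta(\om_2)$ via $\delta \simeq \delta^{\rm{Lie}}$ for $\g$ simple. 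The step you flag as the main obstacle is handled no more explicitly in the paper, which asserts the factorization directly from the $\g$-invariance of $\om_1$ and of the cycle $\epsilon(\om_1)$.
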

\begin{proof}
Since elements in $K_{m+1}$ are represented by cycles in
$H^{\rm{Lie}}_{m+1}(\h; \, I)^{\g}$ that map via $\epsilon_*$ to cycles
in $HL_{m+2}(\h)$, it follows $d^r[K_{m+1}] =  0$, $r \geq 2$.  

Let $[\om_2] \in H^{\rm{Lie}}_p(\g)$.  Consider  
$$  \om_2 = \sum_{i_1, i_2, \ldots , i_p}g_{i_1} \w g_{i_2} \w \,
\ldots \, \w g_{i_p} \in \g^{\w p}. $$
Now, using the homological algebra of the long exact sequence relating
Leibniz and Lie-algebra homology, 
$$ \tilde{\om}_2 = \sum_{i_1, i_2, \ldots , i_p}g_{i_1} \ot g_{i_2} \ot \,
\ldots \, \ot g_{i_p} \in \g^{\ot p}  $$
is a chain with $\pi' (\tilde{\om}_2) = \om_2$ and 
$d(\tilde{\om}_2) \neq 0$ in the Leibniz complex.  Moreover,
$$ d(\tilde{\om}_2) \in {\rm{Ker}}
\big( \g^{\ot (p-1)} \to \g^{\w (p-1)} \big) $$
and $d(\tilde{\om}_2)$ represents the class $\delta(\om_2)$ in
$H^{\rm{rel}}_{p-3}(\g) \simeq HR_{p-3}(\g)$.  Since $w_1 \in
K_{m+1-p}$ is $\g$-invariant, $[\om_1, \ g] = 0$ $\forall \, g \in
\g$.  Also, $\epsilon (\om_1)$ is a $\g$-invariant cycle representing
$\om_1$ in $T(\g)$.  Thus,
$$  d^{m+3-p}(\om_1 \ot \om_2) = \om_1 \ot d(\tilde{\om}_2) = \om_1 \ot
\delta(\om_2).  $$
\end{proof}
 
By a similar argument to that in Lemma \eqref{PSS-1}, we have that
$$ d^r[\Lambda^*(I)^{\g} \ot \delta^{\rm{Lie}}(H^{\rm{Lie}}_{*+3}(\g))] = 0,
\ \ \  r \geq 2, $$
where 
$$ \Lambda^k(I)^{\g} \ot \delta^{\rm{Lie}}(H^{\rm{Lie}}_{m+3}(\g))
\subseteq HL_k(\h) \ot HR_m(\h) \subseteq E^2_{m, \, k}. $$
Thus, the elements in 
$M = \Lambda^*(I)^{\g} \ot \delta^{\rm{Lie}}(H^{\rm{Lie}}_{*+3}(\g))$
represent absolute cycles in $H^{\rm{rel}}_*(\h)$.   We claim that
$$  M \subseteq {\rm{Ker}}(H^{\rm{rel}}_*(\h) \to HL_{*+2}(\h)),  $$
which follows from:
\begin{lemma}
The boundary map
$$ \delta: H^{\rm{Lie}}_{*+3}(\h) \to H^{\rm{rel}}_*(\h) $$
satisfies
$$ \delta[\Lambda^k(I)^{\g} \ot H^{\rm{Lie}}_{m+3}(\g)] =
\Lambda^k(I)^{\g} \ot \delta^{\rm{Lie}}[H^{\rm{Lie}}_{m+3}(\g)], $$
for $k \geq 0$, $m \geq 0$. 
\end{lemma}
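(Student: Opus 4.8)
The plan is to exhibit explicit chain-level representatives on both sides and to evaluate the connecting map $\delta$ directly on them. First I would record a chain-level model for the source summand. Writing $\al \in \Lambda^k(I)^{\g}$ and choosing a Lie cycle $\om \in \g^{\w(m+3)}$ representing a class in $H^{\rm{Lie}}_{m+3}(\g)$, I claim that the wedge $\al \w \om \in \h^{\w(k+m+3)}$ is itself a Lie cycle representing $\al \ot [\om]$ under the Hochschild--Serre isomorphism of Lemma \eqref{2.1}. To see $d_{\rm{Lie}}(\al \w \om) = 0$, split the differential into brackets internal to the $\al$-block, brackets internal to the $\om$-block, and cross brackets. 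The first vanish since $I$ is Abelian; the second assemble into $\pm\, \al \w d_{\rm{Lie}}(\om) = 0$; and the cross terms, for each fixed factor $g$ of $\om$, sum to $\pm\, g \cdot \al$, which is zero because $\al$ is $\g$-invariant.

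Next I would recall the description of $\delta$ arising from the short exact sequence $0 \to C^{\rm{rel}}(\h) \to T(\h) \overset{\pi'}{\longrightarrow} \Lambda^*(\h) \to 0$: for a Lie cycle $c$ one lifts along a section of $\pi'$, applies $d_{\rm{Lieb}}$, and reads off the resulting relative cycle, exactly as in the proof of Lemma \eqref{PSS-1}. The key device is to choose the lift economically. I would take
$$\Phi = {\rm As}(\al) \ot {\rm As}(\om) \in I^{\ot k} \ot \g^{\ot(m+3)} \subseteq \h^{\ot(k+m+3)},$$
the tensor of the antisymmetrized $\al$-block with the antisymmetrized $\om$-block, and check $\pi'(\Phi) = \al \w \om$, so that $[d_{\rm{Lieb}}(\Phi)]$ computes $\delta(\al \w \om)$.

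The heart of the argument is the evaluation of $d_{\rm{Lieb}}(\Phi)$, again organized by the three bracket types. The $\al$-internal brackets vanish. The $\om$-internal brackets keep the $\al$-block as a spectator and assemble into $(-1)^k\, {\rm As}(\al) \ot d_{\rm{Lieb}}({\rm As}(\om))$; by the computation in Lemma \eqref{PSS-1} applied to $\g$, the factor $d_{\rm{Lieb}}({\rm As}(\om))$ is a relative cycle representing $\delta^{\rm{Lie}}(\om) \in HR_m(\g) \simeq H^{\rm{rel}}_m(\g)$. The cross terms are where the hypothesis is used: fixing a factor $g$ of $\om$ and absorbing the antisymmetrization signs, the sum over the $\al$-block of the brackets $[a, g]$ is exactly $-\,g \cdot {\rm As}(\al)$, the derivation action of $g$ on the $\g$-invariant tensor ${\rm As}(\al)$, hence zero. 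I expect this cross-term cancellation to be the main obstacle, both because it is the step that genuinely invokes $\al \in \Lambda^k(I)^{\g}$ and because it requires care with the position and sign bookkeeping of the antisymmetrized lift (one must also confirm ${\rm As}$ is $\g$-equivariant so that ${\rm As}(\al)$ is invariant in $I^{\ot k}$).

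Finally I would read off the class. Since ${\rm As}(\al)$ is a Leibniz cycle, $I$ being Abelian, it represents $\al$ inside $HL_k(\h)$, while $d_{\rm{Lieb}}({\rm As}(\om))$ represents $\delta^{\rm{Lie}}(\om)$; hence the relative cycle $d_{\rm{Lieb}}(\Phi) = (-1)^k\, {\rm As}(\al) \ot d_{\rm{Lieb}}({\rm As}(\om))$ represents $(-1)^k\, \al \ot \delta^{\rm{Lie}}(\om)$ under the identification $E^2_{m,\,k} \simeq HL_k(\h) \ot HR_m(\h)$. Thus $\delta(\al \w \om)$ lies in $\Lambda^k(I)^{\g} \ot \delta^{\rm{Lie}}[H^{\rm{Lie}}_{m+3}(\g)]$, and as $\al$ and $[\om]$ vary, using that $\delta^{\rm{Lie}}$ is an isomorphism on $H^{\rm{Lie}}_{m+3}(\g)$ for $\g$ simple, the image is exactly that subspace, giving the asserted equality.
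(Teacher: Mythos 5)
Your proposal is correct and takes essentially the same approach as the paper: the paper's proof is a two-line sketch that says to use the long exact sequence relating Lie and Leibniz homology and to choose chain-level representatives for the classes in $\Lambda^k(I)^{\g} \ot H^{\rm{Lie}}_{m+3}(\g)$, which is exactly what your explicit lift $\Phi = \mathrm{As}(\al)\ot\mathrm{As}(\om)$ and the evaluation $d_{\rm{Lieb}}(\Phi) = (-1)^k\,\mathrm{As}(\al)\ot d_{\rm{Lieb}}(\mathrm{As}(\om))$ carry out in detail. Your cross-term cancellation via the $\g$-invariance of $\al$ (and the vanishing of the $I$-internal brackets) is precisely the content the paper leaves implicit.
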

\begin{proof}
The proof follows from the long exact sequence relating Lie and
Leibniz homology by choosing representatives for the Lie classes
$\Lambda^k(I)^{\g} \ot H^{\rm{Lie}}_{m+3}(\g)$ at the chain level.
Also, note that for $\g$ simple, $H^{\rm{Lie}}_1(\g) = 0$ and 
$H^{\rm{Lie}}_2(\g) = 0$.  
\end{proof}

Now let
$$  \om_1 \ot (\om_2 \ot \om_3) \in \Lambda^k(I)^{\g} \ot (K_{m+1-p}
\ot H^{\rm{Lie}}_p(\g)) \subseteq E^2_{m, \, k}.  $$
Then, using the $\g$-invariance of elements in $\Lambda^*(I)^{\g}$ and
$K_*$, as well as the $\h$-invariance of $\Lambda^*(I)^{\g}$, we have 
\begin{align*}
d^{m+3-p}&(\om_1 \ot \om_2 \ot \om_3) =  
(\om_1 \ot \om_2 )\ot \delta(\om_3) \in \\
& (\Lambda^k(I)^{\g} \ot K_{m+1-p}) \ot
\delta^{\rm{Lie}}[H^{\rm{Lie}}_p(\g)] \\ 
& \subseteq HL_{k+m+2-p}(\h) \ot HR_{p-3}(\g) \subseteq E^2_{p-3, \,
  k+m+2-p}.  
\end{align*}

For the remainder of this section, we suppose: 

\bigskip
\noindent
{\bf Hypothesis A}:  {\em Every element of $K_n$ has an $\h$-invariant 
representative in $HL_{n+1}(\h)$ at the chain level.}

\bigskip
Since $I$ acts trivially on $H^{\rm{Lie}}_*(I; \, \h)$, we have
$H^{\rm{Lie}}_*(I; \, \h)^{\g} = H^{\rm{Lie}}_*(I; \, \h)^{\h}$, 
and Hypothesis A is reasonable.  The end of this section offers a 
canonical construction of $\h$-invariants.

\begin{theorem} \label{main-theorem} 
Let $0 \to I \to \h \to \g \to 0$ be an Abelian extension of a simple
real Lie algebra $\g$.  Then, under Hypothesis A, we have
$$  HL_*(\h) \simeq \Lambda^*(I)^{\g} \ot T(K_*),  $$
where $T(K_*) = \sum_{n \geq 0}K_*^{\ot n}$ denotes the tensor algebra,
and
$$ K_n = {\rm{Ker}}[H^{\rm{Lie}}_n(I; \, \h)^{\g} \to
  H^{\rm{Lie}}_{n+1}(\h)].  $$
\end{theorem}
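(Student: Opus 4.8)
The plan is to compute $HL_*(\h)$ by fully determining the Pirashvili spectral sequence converging to $H^{\rm{rel}}_*(\h)$, and then feeding the answer back into the long exact sequence relating Lie and Leibniz homology. The recursion is the organizing principle: since $E^2_{m,\,k} \simeq HL_k(\h) \ot HR_m(\h)$, knowledge of $HL_j(\h)$ for $j \leq n$ together with Theorem \eqref{2.2} (which expresses $HR_m(\h)$ in terms of $\delta^{\rm{Lie}}[H^{\rm{Lie}}_{m+3}(\g)]$ and the pieces $K_{m+1-i} \ot H^{\rm{Lie}}_i(\g)$) determines the $E^2$-page in the range relevant to computing $HL_{n+1}(\h)$. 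So I would proceed by induction on $n$, using the base cases $HL_0(\h) \simeq \br$, $HL_1(\h) \simeq I^{\g}$, and $HL_2(\h) \simeq H^{\rm{Lie}}_1(I;\,\h)^{\g}$ already recorded in the excerpt.

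The heart of the argument is to identify exactly which classes on the $E^2$-page survive to $E^{\infty}$ and which are killed by higher differentials, using the differential computations in Lemma \eqref{PSS-1} and the displays following it. The key dichotomy is this: the summand $\delta^{\rm{Lie}}[H^{\rm{Lie}}_{m+3}(\g)]$ of $HR_m(\h)$ carries zero differential (it comes from the simple algebra $\g$ by naturality $\mathcal{F}^*_m(\g) \hookrightarrow \mathcal{F}^*_m(\h)$), and by the Lemma computing $\delta$ these classes lie in ${\rm{Ker}}(H^{\rm{rel}}_*(\h) \to HL_{*+2}(\h))$, so they contribute nothing to Leibniz homology. By contrast, the differential $d^{m+3-p}(\om_1 \ot \om_2 \ot \om_3) = (\om_1 \ot \om_2) \ot \delta(\om_3)$ identifies each product $\Lambda^k(I)^{\g} \ot K_{m+1-p} \ot H^{\rm{Lie}}_p(\g)$ (for $p \geq 3$) isomorphically with a corresponding term $\Lambda^k(I)^{\g} \ot K_{m+1-p} \ot \delta^{\rm{Lie}}[H^{\rm{Lie}}_p(\g)]$ that is thereby annihilated. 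Since $\delta^{\rm{Lie}}: H^{\rm{Lie}}_p(\g) \to HR_{p-3}(\g)$ is an isomorphism for $p \geq 3$ and $H^{\rm{Lie}}_1(\g) = H^{\rm{Lie}}_2(\g) = 0$, these differentials pair off the $p \geq 3$ contributions, and what remains at $E^{\infty}$ is precisely the $p = 0$ part, namely the classes $\Lambda^k(I)^{\g} \ot K_{m+1}$.

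Assembling the surviving terms, I would read off that the subquotient of $H^{\rm{rel}}_*(\h)$ mapping isomorphically onto $\tilde{H}L_{*+2}(\h)$ is built from tensor products $\Lambda^k(I)^{\g} \ot K_{m_1} \ot \cdots \ot K_{m_r}$; the recursion manufactures one more tensor factor of $K_*$ at each stage, which is exactly the free tensor algebra structure $T(K_*)$, tensored with the coefficient ring $\Lambda^*(I)^{\g}$. Combining with the long exact sequence (where the relative term injects the new $K$-factor and the Lie term $H^{\rm{Lie}}_*(\h) \simeq \Lambda^*(I)^{\g} \ot H^{\rm{Lie}}_*(\g)$ from Lemma \eqref{2.1} contributes only through its $\Lambda^*(I)^{\g}$ part, the $H^{\rm{Lie}}_{\geq 3}(\g)$ factors having been cancelled) yields $HL_*(\h) \simeq \Lambda^*(I)^{\g} \ot T(K_*)$ by induction.

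The main obstacle I anticipate is controlling the multiplicative/recursive bookkeeping so that the surviving $E^{\infty}$ terms genuinely assemble into the \emph{free} tensor algebra on $K_*$ rather than some symmetrized or truncated quotient. Concretely, I must verify that no \emph{further} differentials (beyond those of Lemma \eqref{PSS-1}) act among the surviving $K$-products, and that the extension problems in passing from $E^{\infty}$ to $H^{\rm{rel}}_*(\h)$, and then through the long exact sequence to $HL_*(\h)$, all split as vector spaces. This is exactly where Hypothesis A is essential: it guarantees each $K_n$ class has an $\h$-invariant chain-level representative, so that when a $K$-class is tensored into a higher filtration stage the $\g$-invariance (indeed $\h$-invariance) used in the differential formula $d^{m+3-p}(\om_1 \ot \om_2 \ot \om_3) = (\om_1 \ot \om_2) \ot \delta(\om_3)$ continues to hold at every level of the recursion. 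Without it, a surviving $K$-factor might fail to remain a cycle when multiplied against a $\delta(\om_3)$ arising from a deeper stage, breaking the clean tensor-algebra splitting; so I would spend the most care checking that the $\h$-invariant representatives propagate correctly through iterated applications of the spectral sequence.
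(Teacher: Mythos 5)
Your proposal is correct and follows essentially the same route as the paper: an induction driven by the recursive $E^2$-term $HL_k(\h)\ot HR_m(\h)$ of the Pirashvili spectral sequence, the dichotomy between the $\delta^{\rm{Lie}}[H^{\rm{Lie}}_{*+3}(\g)]$ summands (surviving but lying in ${\rm{Im}}\,\delta$, hence invisible in $HL$) and the $K_{*}\ot H^{\rm{Lie}}_p(\g)$ summands (cancelled in pairs by the differentials of Lemma \eqref{PSS-1} for $p\geq 3$), and assembly through the Lie--Leibniz long exact sequence. You also correctly isolate the role of Hypothesis A, namely that $\h$-invariant representatives of $K_*$ classes are needed to propagate the differential formula through the iterated tensor factors, which is precisely the paper's induction on $\ell$ in $K_*^{\ot \ell}\ot H_*^{\rm{Lie}}(\g)$.
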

\begin{proof}
It follows by induction on $\ell$ that for certain $r$
$$  d^r[K_*^{\ot \ell} \ot H_*^{\rm{Lie}}(\g)] \to
K_*^{\ot \ell} \ot \delta^{\rm{Lie}}[H_*^{\rm{Lie}}(\g)]  $$
is given by
$$  d^r(\om_1 \ot \om_2 \ot \, \ldots \, \ot \om_{\ell} \ot v) =
\om_1 \ot \om_2 \ot \, \ldots \, \ot \om_{\ell} \ot \delta(v), $$
where $\om_i \in K_*$ and $v \in H_*^{\rm{Lie}}(\g)$.  By a similar
induction argument, we have
$$ d^r[\Lambda^*(I)^{\g} \ot K_*^{\ot \ell} \ot H_*^{\rm{Lie}}(\g)] 
\to \Lambda^*(I)^{\g} \ot K_*^{\ot \ell} \ot 
\delta^{\rm{Lie}}[H_*^{\rm{Lie}}(\g)]  $$ 
is given by
$$  d^r(u \ot \om_1 \ot \om_2 \ot \, \ldots \, \ot \om_{\ell} \ot v) =
u \ot \om_1 \ot \, \ldots \, \ot \om_{\ell} \ot \delta(v), $$
where $u \in \Lambda^*(I)^{\g}$, $\om_i \in K_*$ and 
$v \in H_*^{\rm{Lie}}(\g)$.  The only absolute cycles in the 
Pirashvili spectral sequence are elements of
$$  \Lambda^*(I)^{\g} \ot K_*^{\ot \ell},  $$
which are not in 
${\rm{Im}} \, \delta: H_{*+3}^{\rm{Lie}}(\h) \to H^{\rm{rel}}_*(\h)$.
By induction on $\ell$, 
$$  HL_*(\h) \simeq \Lambda^*(I)^{\g} \ot T(K_*).  $$
\end{proof}

We now study elements in $K_n$ and outline certain canonical 
constructions to produce $\h$-invariants.  Recall that 
$H_*^{\rm{Lie}}(I; \, \h)^{\g}$ is the homology of
$$  \CD 
\h^{\g} @<[\ , \ ]<< (\h \ot I)^{\g} @<d<< (\h \ot I^{\wedge 2})^{\g} @<d<< 
\ldots @<d<< (\h \ot I^{\wedge n})^{\g} @<d<<  . 
\endCD $$
Note that as $\g$-modules, $\h \simeq \g \oplus I$ and
$$ \h \ot I^{\wedge n} \simeq (\g \ot I^{\wedge n}) \oplus 
(I \ot I^{\wedge n}). $$
Thus, 
$(\h \ot I^{\wedge n})^{\g} \simeq (\g \ot I^{\wedge n})^{\g} 
\oplus (I \ot I^{\wedge n})^{\g}$.  Any element in $K_n$ having a 
representative in $(I \ot I^{\wedge n})^{\g}$ is necessarily an $\h$-invariant
at the chain level, since $I$ acts trivially on 
$(I \ot I^{\wedge n})^{\g}$.  Of course, all elements of 
$(I \ot I^{\wedge n})^{\g}$ are cycles.  

\begin{lemma} \label{alpha-inv}
Any non-zero element in $K_n$ having a representative in 
$$(\g \ot I^{\wedge n})^{\g}$$ is determined by an injective map of
$\g$-modules, $\al : \g \to I^{\wedge n},$
where $\g$ acts on itself via the adjoint action.
\end{lemma}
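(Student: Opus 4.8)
The plan is to reduce the statement to Schur's lemma by exploiting the self-duality of a simple Lie algebra. First I would fix the Killing form $B$ on $\g$, which is nondegenerate because $\g$ is simple and is ad-invariant. It therefore furnishes an isomorphism of $\g$-modules $\g \simeq \g^*$, sending $g$ to $B(g, -)$, where $\g$ carries the adjoint action and $\g^*$ the contragredient action. Tensoring with $I^{\wedge n}$ gives a $\g$-module isomorphism
$$ \g \ot I^{\wedge n} \simeq \g^* \ot I^{\wedge n} \simeq \operatorname{Hom}(\g, I^{\wedge n}), $$
and passing to $\g$-invariants identifies $(\g \ot I^{\wedge n})^{\g}$ with $\operatorname{Hom}_{\g}(\g, I^{\wedge n})$, the space of $\g$-equivariant maps $\al : \g \to I^{\wedge n}$. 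This is exactly the correspondence asserted by the lemma: a $\g$-invariant chain in $\g \ot I^{\wedge n}$ is the same datum as such a map $\al$.

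Next I would check that a non-zero class in $K_n$ produces a non-zero $\al$. A non-zero homology class cannot be represented by the zero chain, so any representative lying in $(\g \ot I^{\wedge n})^{\g}$ is a non-zero invariant tensor, and under the isomorphism above it corresponds to a non-zero homomorphism $\al$. Only the non-vanishing of the representative is used here, not the cycle condition.

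Finally, injectivity follows from simplicity. A $\g$-submodule of $\g$ under the adjoint action is the same as an ideal of $\g$, so the irreducibility of $\g$ forces its only submodules to be $0$ and $\g$. The kernel of the $\g$-module map $\al$ is such a submodule; since $\al \neq 0$ it cannot be all of $\g$, whence $\ker \al = 0$ and $\al$ is injective. I expect the only point requiring care to be the identification of invariant tensors with equivariant maps, namely verifying that ad-invariance of $B$ converts the invariance condition $y \cdot \om = 0$ into the intertwining identity $\al([y,x]) = y \cdot \al(x)$. This is the standard adjunction between $(\g \ot I^{\wedge n})^{\g}$ and $\operatorname{Hom}_{\g}(\g, I^{\wedge n})$ and is routine once $B$ is chosen; the remaining steps are immediate.
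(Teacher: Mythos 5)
Your proof is correct and follows essentially the same route as the paper: the Killing form gives the $\g$-equivariant identification $(\g \ot I^{\wedge n})^{\g} \simeq {\rm{Hom}}_{\g}(\g, \ I^{\wedge n})$, and simplicity of $\g$ forces any non-zero equivariant map $\al$ to have trivial kernel. Your added remarks (nondegeneracy of $B$ by Cartan's criterion, the kernel of $\al$ being an ideal) only make explicit what the paper leaves implicit.
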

\begin{proof}
Let $B : \g \, \overset{\simeq}{\longrightarrow} \, \g^* = 
{\rm{Hom}}_{\br}(\g, \ \br)$ be the isomorphism from a simple Lie
algebra to its dual induced by the Killing form.  Then the composition
$$  \g \ot I^{\wedge n} \, \overset{B \ot \bf{1}}{\longrightarrow} \,
\g^* \ot I^{\wedge n} \, \overset{\simeq}{\longrightarrow} \,
{\rm{Hom}}_{\br}(\g, \ I^{\wedge n})  $$
is $\g$-equivariant, and induces an isomorphism
$$  (\g \ot I^{\wedge n})^{\g} \, \overset{\simeq}{\longrightarrow} \,
{\rm{Hom}}_{\g}(\g, \ I^{\wedge n}).  $$
Since $\g$ is simple, a non-zero map of $\g$-modules
$\al : \g \to I^{\wedge n}$ has no kernel, and $\g \simeq {\rm{Im}}\, (\al)$.  
\end{proof} 

Consider the special case where $I \simeq \g$ as $\g$-modules, although
$I$ remains an Abelian Lie algebra.  Let $\al : \g \to I$ be a 
$\g$-module isomorphism and let $B^{-1} : \g^* \to \g$ be the inverse
of $B : \g \to \g^*$ in the proof of Lemma \eqref{alpha-inv}.  For 
a vector space basis $\{ b_i \}_{i=1}^n$ of $\g$, let $\{ b_i^* \}_{i=1}^n$ 
denote the dual basis.
\begin{lemma}
With $\al : \g \simeq I$ as above, the balanced tensor 
$$ \om = \sum_{i=1}^n B^{-1}(b_i^*) \ot \al(b_i) + \al(B^{-1}(b_i^*)) \ot b_i
\in \h \ot \h  $$
is $\h$-invariant.
\end{lemma}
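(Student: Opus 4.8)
The plan is to recognize $\om$ as the image of the Casimir tensor of $\g$ under the two natural maps induced by $\al$, and then deduce $\h$-invariance from the $\g$-invariance of that tensor. Write $b^i = B^{-1}(b_i^*) \in \g$ for the basis dual to $\{b_i\}$ with respect to the Killing form, so that the Casimir tensor is $C = \sum_{i=1}^n b^i \ot b_i \in \g \ot \g$. Then one checks directly against the given formula that
$$ \om = (\mathbf{1} \ot \al)(C) + (\al \ot \mathbf{1})(C), $$
where in the first summand the first slot of $C$ is embedded via $\g \hookrightarrow \h$ and the second via $\al : \g \to I \hookrightarrow \h$, and symmetrically in the second summand. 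First I would record the standard fact that $C$ is $\g$-invariant for the diagonal adjoint action: writing $[x,C] = \sum_i [x,b^i]\ot b_i + b^i\ot [x,b_i]$, the invariance of the Killing form $\kappa$, namely $\kappa([x,y],z)+\kappa(y,[x,z])=0$, forces the structure-constant cancellation
$$ [x,C] = \sum_i [x, b^i] \ot b_i + b^i \ot [x, b_i] = 0, \qquad \forall\, x \in \g. $$

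Since $\h = \g + I$ as vector spaces, it suffices to verify $[h, \om] = 0$ separately for $h \in \g$ and for $h \in I$. For the $\g$-part, both the inclusion $\g \hookrightarrow \h$ and the module isomorphism $\al : \g \to I \subseteq \h$ are $\g$-equivariant for the relevant adjoint actions, hence so are the maps $\mathbf{1}\ot\al$ and $\al\ot\mathbf{1}$ from $\g\ot\g$ to $\h\ot\h$; as $C$ is $\g$-invariant, its images add up to a $\g$-invariant $\om$. This step is essentially formal.

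The substantive step, which I expect to be the main obstacle, is $I$-invariance. Fix $a \in I$ and compute $[a,\om]$ with the diagonal $\h$-action. Every bracket of two elements of $I$ vanishes because $I$ is Abelian, so the only surviving terms arise from bracketing $a$ against the $\g$-slot of each summand, via $[a,g] = -[g,a]$ for $g \in \g$. Writing $a = \al(x)$ (possible since $\al$ is an isomorphism) and using the $\g$-equivariance $[g,\al(y)] = \al([g,y])$, the untouched $I$-slot of each summand pairs with the differentiated $\g$-slot so that the two groups of terms assemble into
$$ [a,\om] = -(\al\ot\al)\Big(\sum_i [b^i,x]\ot b_i + b^i\ot [b_i,x]\Big) = (\al\ot\al)\big([x,C]\big) = 0, $$
the last equality being precisely the Casimir invariance established above. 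Combining the two cases gives $[h,\om]=0$ for all $h\in\h$, so $\om$ is $\h$-invariant. The only delicate points are the sign bookkeeping in $[a,g]=-[g,a]$ and checking that, once $\om$ is written as $(\mathbf{1}\ot\al)(C)+(\al\ot\mathbf{1})(C)$, the untouched $I$-slots of the two summands match the two halves of the relation $[x,C]=0$; this matching is exactly what makes the cross terms cancel.
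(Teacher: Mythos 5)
Your proposal is correct and follows essentially the same route as the paper: both split the verification into the $\g$-part (handled by the $\g$-equivariance of $\al$ and the invariance of the Casimir-type tensor $\sum_i B^{-1}(b_i^*)\ot b_i$) and the $I$-part, where the key step---writing $a=\al(g_0)$, using that $I$ is Abelian so only the $\g$-slots contribute, and collapsing the surviving terms to $(\al\ot\al)$ applied to the bracket of the Casimir tensor with $g_0$---is exactly the paper's computation. Your only addition is an explicit derivation of the Casimir invariance from the invariance of the Killing form, which the paper leaves implicit.
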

\begin{proof}
By construction,
$$  \sum_{i=1}^n B^{-1}(b_i^*) \ot \al(b_i) \in \g \ot I  
\hookrightarrow \h \ot \h  $$
is $\g$-invariant.  Since $\al$ is an isomorphism of $\g$-modules, it
follows that 
$$  \sum_{i=1}^n  \al(B^{-1}(b_i^*)) \ot b_i \in I \ot \g  
\hookrightarrow \h \ot \h  $$
is also a $\g$-invariant.  Now, let $a \in I$.  There is some $g_0 \in \g$
with $\al(g_0) = a$.  Thus, 
\begin{align*}
& [\om, \ a] = [\om, \ \al(g_0) ] \\
& = \sum_{i=1}^n [B^{-1}(b_i^*) \ot \al(b_i), \ \al(g_0)] + 
[\al(B^{-1}(b_i^*)) \ot b_i, \ \al(g_0)]  \\
& = \sum_{i=1}^n \al\big([B^{-1}(b_i^*),\ g_0]\big) \ot \al(b_i) + 
\al(B^{-1}(b_i^*)) \ot \al\big( [b_i, \ g_0] \big)  \\
& = \sum_{i=1}^n (\al \ot \al) \big( [B^{-1}(b_i^*) \ot b_i, \ g_0] \big)
= 0.
\end{align*} 
Since $\al : \g \to I$ is an isomorphism of $\g$-modules, it follows 
that if 
$$ \sum_{i=1}^n B(b_i^*) \ot \al(b_i) \in \g \ot I  $$ 
is a $\g$-invariant, then
$\sum_{i=1}^n B(b_i^*) \ot b_i \in \g \ot \g$ is also a $\g$-invariant.
\end{proof}

\section{Applications}

We compute the Leibniz homology for extensions of the classical Lie
algebras $\frak{sl}_n(\br)$, $\frak{so}_n(\br)$, and $\frak{sp}_n(\br)$.
Additionally, $HL_*$ is calculated for the Lie algebra of the 
Poincar\'e group $\br^4 \rtimes   SL_2( \bc )$ and the 
Lie algebra of the affine Lorentz group $\br^4 \rtimes SO(3,1)$.  
To describe a common setting
for these examples, let $\g$ be a (semi-)simple real Lie algebra, 
and consider $\g \subseteq \frak{gl}_n(\br)$.  Then $\g$ acts on
$I = \br^n$ via matrix multiplication on vectors in $\br^n$, which
is often called the standard representation.  Consider
$$  \frac{\p}{\p x^1}, \ \frac{\p}{\p x^2}, \ \ldots, \ 
\frac{\p}{\p x^n}  $$ 
as a vector space basis for $\br^n$.  Then the elementary matrix with 1 in row 
$i$, column $j$, and 0s everywhere else becomes $x_i \frac{\p}{\p x^j}$.  
In the sequel, $\h$ denotes the real Lie algebra formed via the
extension
$$  \CD 
0 @>>> I @>>> \h @>>> \g @>>> 0 \, .  
\endCD $$
Also, the element
$$ \frac{\p}{\p x^1} \w \frac{\p}{\p x^2} \w \, \ldots \, \w 
\frac{\p}{\p x^n} \in I^{\w n}  $$
is the volume form, and often occurs as a $\g$-invariant.

\begin{corollary} \label{reductive}
Let $\g$ be a simple Lie algebra and I an Abelian Lie algebra,
both over $\br$.  If $\h \simeq \g \oplus I$ as Lie algebras,
i.e., $\h$ is reductive, then
\begin{align*}
& HL_*(\h) \simeq \Lambda^*(I) \ot T(K_*), \\
& K_* = {\rm{Ker}}(I \ot \Lambda^*(I) \to \Lambda^{*+1}(I))
\end{align*}
\end{corollary}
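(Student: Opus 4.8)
The plan is to specialize Theorem \ref{main-theorem} to the reductive case, so that the work reduces to computing the two tensor factors $\Lambda^*(I)^{\g}$ and $K_*$ explicitly and checking that Hypothesis A holds. First I would record that when $\h \simeq \g \oplus I$ as Lie algebras the bracket $[\g, \, I]$ vanishes, so $\g$ acts trivially on $I$ and hence on $\Lambda^*(I)$. This immediately gives $\Lambda^*(I)^{\g} = \Lambda^*(I)$, identifying the first tensor factor.

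Next I would compute $H^{\rm{Lie}}_*(I; \, \h)^{\g}$. The key observation is that $I$ acts trivially on $\h$: since $[I, \, I] = 0$ and $[I, \, \g] = 0$, every element of $\h$ is an $I$-invariant. Therefore the complex computing $H^{\rm{Lie}}_*(I; \, \h)$ has zero differential and equals $\h \ot \Lambda^*(I)$. As $\g$-modules $\h \simeq \g \oplus I$, with $\g$ acting by the adjoint action on itself and trivially on $I$, so
$$ (\h \ot \Lambda^*(I))^{\g} \simeq (\g \ot \Lambda^*(I))^{\g} \oplus (I \ot \Lambda^*(I))^{\g}. $$
Because $\g$ is simple its center $\g^{\g}$ is zero, forcing $(\g \ot \Lambda^*(I))^{\g} = 0$, while the trivial action gives $(I \ot \Lambda^*(I))^{\g} = I \ot \Lambda^*(I)$. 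Hence $H^{\rm{Lie}}_n(I; \, \h)^{\g} \simeq I \ot \Lambda^n(I)$.

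With the invariants in hand I would identify the map $H^{\rm{Lie}}_n(I; \, \h)^{\g} \to H^{\rm{Lie}}_{n+1}(\h)$ defining $K_n$. Tracing $j_*$ followed by $\pi_*$ at the chain level, the projection $\pi : \h \ot \h^{\w n} \to \h^{\w (n+1)}$ restricted to $I \ot I^{\w n}$ is precisely antisymmetrization into $\Lambda^{n+1}(I)$. Via the decomposition of Lemma \ref{2.1} this image lands in the $\Lambda^{n+1}(I) \ot H^{\rm{Lie}}_0(\g)$ summand of $H^{\rm{Lie}}_{n+1}(\h)$, so under the identification above the map is exactly the wedge map $I \ot \Lambda^n(I) \to \Lambda^{n+1}(I)$. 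Consequently $K_* = {\rm{Ker}}(I \ot \Lambda^*(I) \to \Lambda^{*+1}(I))$, matching the corollary and recovering the splitting of Lemma \ref{3.2} concretely as $I \ot \Lambda^n(I) \simeq \Lambda^{n+1}(I) \oplus K_n$.

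Finally I would verify Hypothesis A and invoke the main theorem. Since every class in $K_*$ has a representative in $(I \ot I^{\w *})^{\g} = I \ot I^{\w *}$, on which $I$ acts trivially, each such representative is automatically $\h$-invariant at the chain level, so Hypothesis A is satisfied without further hypotheses. Theorem \ref{main-theorem} then yields $HL_*(\h) \simeq \Lambda^*(I)^{\g} \ot T(K_*) = \Lambda^*(I) \ot T(K_*)$, as claimed. I expect the only point requiring care to be the identification in the third step: one must confirm that the map into $H^{\rm{Lie}}_{n+1}(\h)$ has no components in positive Lie-homology degrees of $\g$, which holds because the source consists of $\g$-invariants and therefore sits in the $H^{\rm{Lie}}_0(\g)$ part of the decomposition.
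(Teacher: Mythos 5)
Your proposal is correct and follows essentially the same route as the paper: specialize Theorem \ref{main-theorem} after observing that the trivial $\g$-action gives $[\Lambda^*(I)]^{\g} = \Lambda^*(I)$ and $H^{\rm{Lie}}_*(I; \, \h)^{\g} \simeq I \ot \Lambda^*(I)$, so that $K_*$ becomes the kernel of the wedge map. The paper's proof is terser---it states these three identifications and stops---whereas you additionally make explicit the chain-level identification of $\pi_* \circ j_*$ with the wedge map and the verification of Hypothesis A via $\h$-invariance of chains in $I^{\ot(*+1)}$; these are details the paper leaves implicit (the Hypothesis A point is covered by its earlier remark that elements represented in $(I \ot I^{\w n})^{\g}$ are automatically $\h$-invariant), not a different method.
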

\begin{proof}
Since $\g$ acts trivially on $I$, it follow that
\begin{align*}
& [\Lambda^*(I)]^{\g} = \Lambda^*(I), \\
& H^{\rm{Lie}}_*(\h) \simeq H^{\rm{Lie}}_*(\g) \ot \Lambda^*(I), \\
& H^{\rm{Lie}}_*(I; \, \h)^{\g} \simeq I \ot \Lambda^*(I).  
\end{align*}
Thus, $K_n = {\rm{Ker}}(I \ot \Lambda^n(I) \to \Lambda^{n+1}(I))$.
\end{proof}
By way of comparison, from \cite{Loday1996}, under the hypotheses of 
Corollary \eqref{reductive}, we have
$$ HL_*(\h) \simeq HL_*(\g) * HL_*(I) \simeq HL_*(I) \simeq T(I).  $$
Thus, as vector spaces, $\Lambda^*(I) \ot T(K_*) \simeq T(I)$.  For
tensors of degree two, the above isomorphism becomes
$$  \Lambda^2(I) \oplus S^2(I) \simeq I^{\ot 2},  $$
where $S^2(I)$ denotes the second symmetric power of $I$.  

\begin{corollary}
For $\g = \frak{sl}_n(\br)$ and $I = \br^n$ the standard representation
of $\frak{sl}_n(\br)$, we have
$$  HL_*(\h) \simeq [\Lambda^*(I)]^{\frak{sl}_n(\br)} =
\big\langle \frac{\p}{\p x^1} \w \frac{\p}{\p x^2} \w \, \ldots \, \w 
\frac{\p}{\p x^n} \big\rangle .  $$
\end{corollary}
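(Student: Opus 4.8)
The plan is to apply the main theorem, Theorem~\ref{main-theorem}: since $HL_*(\h) \simeq [\Lambda^*(I)]^{\g} \ot T(K_*)$, the corollary amounts to two assertions, namely that $[\Lambda^*(\br^n)]^{\sln_n(\br)}$ reduces to the volume form (plus the degree-zero unit) and that $K_* = 0$. Once $K_m = 0$ is established for all $m$, Hypothesis~A is satisfied vacuously, $T(K_*) = K_*^{\ot 0} = \br$, and the theorem collapses to $HL_*(\h) \simeq [\Lambda^*(I)]^{\sln_n(\br)}$. So the entire content is the vanishing of $K_*$ together with a routine invariant computation in $\Lambda^*(\br^n)$.

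First I would compute $[\Lambda^*(\br^n)]^{\sln_n(\br)}$. For $1 \le k \le n-1$ the module $\Lambda^k(\br^n)$ is the $k$-th fundamental representation of $\sln_n(\br)$, irreducible and nontrivial, hence carries no invariant. For $k = n$ the module $\Lambda^n(\br^n)$ is one-dimensional and $\sln_n(\br)$ acts through the trace, hence trivially, producing the single invariant $\frac{\p}{\p x^1} \w \frac{\p}{\p x^2} \w \ldots \w \frac{\p}{\p x^n}$. Together with $\br = \Lambda^0(\br^n)$ in degree zero, this gives $[\Lambda^*(I)]^{\sln_n(\br)} = \langle \frac{\p}{\p x^1} \w \ldots \w \frac{\p}{\p x^n} \rangle$, exactly as stated.

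Next I would show $K_m = 0$ for all $m \ge 0$. Using the splitting of $\g$-modules $\h \simeq \g \oplus I$ from Section~2, write $(\h \ot I^{\w m})^{\g} \simeq (\g \ot I^{\w m})^{\g} \oplus (I \ot I^{\w m})^{\g}$. By Lemma~\ref{alpha-inv}, $(\g \ot \Lambda^m(\br^n))^{\g} \simeq {\rm{Hom}}_{\g}(\g, \Lambda^m(\br^n))$, and this vanishes because the adjoint module $\g = \sln_n(\br)$ (highest weight $\om_1 + \om_{n-1}$) is not a constituent of the fundamental module $\Lambda^m(\br^n)$ (highest weight $\om_m$); equivalently, there is no injective $\g$-map $\al : \g \to \Lambda^m(\br^n)$. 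With the $\g$-summand gone, the complex computing $H^{\rm{Lie}}_*(I; \, \h)^{\g}$ reduces to $(I \ot I^{\w *})^{\g}$ with vanishing differential (since $I$ acts trivially on itself), so $H^{\rm{Lie}}_m(I; \, \h)^{\g} = (I \ot \Lambda^m(\br^n))^{\g}$. Now the Pieri decomposition gives, for $m \ge 1$,
$$ \br^n \ot \Lambda^m(\br^n) \simeq \Lambda^{m+1}(\br^n) \oplus S^{(2, 1^{m-1})}(\br^n) $$
as $GL_n$-modules, and an irreducible Schur module $S^{\lambda}(\br^n)$ carries an $\sln_n(\br)$-invariant only when $\lambda = (c^n)$ is a rectangle of full height $n$. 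The partition $(2, 1^{m-1})$ is never of this form, so its summand has no invariant, while $\Lambda^{m+1}(\br^n)$ contributes an invariant precisely when $m+1 = n$ (the volume form). Hence $(I \ot \Lambda^m(\br^n))^{\g} = [\Lambda^{m+1}(\br^n)]^{\g}$, and by Lemma~\ref{3.2} this is exactly the $\zeta_*$-image complementary to $K_m$, forcing $K_m = 0$ (the case $m = 0$ being immediate, since $(\br^n)^{\g} = 0 = [\Lambda^1(\br^n)]^{\g}$).

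The main obstacle is the invariant-theoretic step for the $I$-summand: confirming that every $\sln_n(\br)$-invariant of $\br^n \ot \Lambda^m(\br^n)$ already lies in the $\Lambda^{m+1}$ factor (equivalently, in the image of $\zeta_*$), so that none survives in $K_m$. The cleanest verification is the classical first fundamental theorem for $SL_n$: the invariants of $V^{\ot (m+1)}$ with $V = \br^n$ are generated by the Levi-Civita tensor and therefore occur only when $n \mid (m+1)$, and in the range $m \le n$ forced by $\Lambda^m(\br^n) \ne 0$ this happens only at $m+1 = n$, where the unique invariant is the antisymmetrizer, i.e. the volume form in the $\Lambda^{m+1}$ summand. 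This pins down $K_* = 0$, whereupon Theorem~\ref{main-theorem} yields $HL_*(\h) \simeq [\Lambda^*(I)]^{\sln_n(\br)} = \langle \frac{\p}{\p x^1} \w \frac{\p}{\p x^2} \w \ldots \w \frac{\p}{\p x^n} \rangle$.
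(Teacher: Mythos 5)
Your proposal is correct and takes essentially the same route as the paper: both use Lemma \ref{alpha-inv} to rule out nonzero $\g$-module maps $\sln_n(\br) \to I^{\w k}$ (killing the $(\g \ot I^{\w m})^{\g}$ summand), identify $H^{\rm{Lie}}_*(I; \, \h)^{\g}$ with $[I \ot \Lambda^*(I)]^{\g} = \langle$volume form$\rangle$, conclude $K_* = 0$ so that $T(K_*) = \br$, and then apply Theorem \ref{main-theorem}. The only difference is one of detail: you justify the invariant-theoretic step explicitly (the Pieri decomposition of $\br^n \ot \Lambda^m(\br^n)$, rectangular-partition criterion for $SL_n$-invariants, and the first fundamental theorem), whereas the paper simply asserts these facts.
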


\begin{proof} In this case there are no non-trivial 
$\frak{sl}_n(\br)$-module maps from $\frak{sl}_n(\br)$ to $I^{\w k}$.
Using Lemma \eqref{alpha-inv}, we have
\begin{align*}
& H^{\rm{Lie}}_*(I; \, \h)^{\frak{sl}_n(\br)} \simeq 
H^{\rm{Lie}}_*(I; \, I)^{\frak{sl}_n(\br)} \simeq
[I \ot \Lambda^*(I)]^{\frak{sl}_n(\br)} \\ 
& = \big\langle \frac{\p}{\p x^1} \w \frac{\p}{\p x^2} \w \, \ldots \, \w 
\frac{\p}{\p x^n} \big\rangle .  
\end{align*}
Also, 
\begin{align*}
& H^{\rm{Lie}}_*(\h) \simeq H^{\rm{Lie}}_*(\frak{sl}_n(\br)) \ot
[\Lambda^*(I)]^{\frak{sl}_n(\br)} \\ 
& \simeq  H^{\rm{Lie}}_*(\frak{sl}_n(\br)) \ot
\big\langle \frac{\p}{\p x^1} \w \frac{\p}{\p x^2} \w \, \ldots \, \w 
\frac{\p}{\p x^n} \big\rangle . 
\end{align*}
Thus, $T(K_*) = \sum_{m \geq 0}K_*^{\ot m} = \br$, and the corollary
follows from Theorem \eqref{main-theorem}.  
\end{proof}

\begin{corollary}
Consider $\g = \frak{sl}_2(\bc)$ as a real Lie algebra with real
vector space basis:
\begin{align*}
& v_1 = x_1  \frac{\p}{\p x^1} + x_2 \frac{\p}{\p x^2} 
- x_3 \frac{\p}{\p x^3} - x_4 \frac{\p}{\p x^4}, \\
& v_2 = x_1 \frac{\p}{\p x^2} - x_2 \frac{\p}{\p x^1} 
- x_3 \frac{\p}{\p x^4} + x_4 \frac{\p}{\p x^3} \\
& v_3 = x_1 \frac{\p}{\p x^3} + x_2 \frac{\p}{\p x^4} \\
& v_4 = x_1 \frac{\p}{\p x^4} - x_2 \frac{\p}{\p x^3} \\
& v_5 = x_3 \frac{\p}{\p x^1} + x_4 \frac{\p}{\p x^2} \\
& v_6 = x_3 \frac{\p}{\p x^2} - x_4 \frac{\p}{\p x^1}.
\end{align*} 
For $I = \br^4$ the standard representation of 
$\frak{sl}_2(\bc) \subseteq \frak{sl}_4(\br)$, we have
$$  HL_*(\h) \simeq [\Lambda^*(I)]^{\frak{sl}_2(\bc)}.  $$
\end{corollary}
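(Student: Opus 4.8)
The plan is to apply Theorem \eqref{main-theorem} and then show that the tensor-algebra factor collapses to $\br$. Since $\g = \sln_2(\bc)$ is a simple real Lie algebra and $\h$ is its Abelian extension by $I = \br^4$, Theorem \eqref{main-theorem} yields $HL_*(\h) \simeq \Lambda^*(I)^{\g} \ot T(K_*)$ once Hypothesis A is checked, so the corollary reduces to proving $K_n = 0$ for all $n$. By the splitting of Lemma \eqref{3.2}, $H^{\rm{Lie}}_n(I; \h)^{\g} \simeq \Lambda^{n+1}(I)^{\g} \oplus K_n$, and hence it suffices to identify $H^{\rm{Lie}}_n(I; \h)^{\g}$ with $\Lambda^{n+1}(I)^{\g}$ in every degree.

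First I would compute $H^{\rm{Lie}}_*(I; \h)^{\g}$ exactly as in the $\sln_n(\br)$ corollary. Writing $\h \simeq \g \oplus I$ as $\g$-modules gives $(\h \ot I^{\w n})^{\g} \simeq (\g \ot I^{\w n})^{\g} \oplus (I \ot I^{\w n})^{\g}$, and by Lemma \eqref{alpha-inv} the first summand is controlled by $\g$-module maps $\al : \g \to \Lambda^n(I)$. I would rule these out representation-theoretically: complexifying, $\g \ot \bc \simeq \sln_2(\bc) \oplus \sln_2(\bc)$ with its two irreducible factors the adjoint (three-dimensional) representation of each copy, whereas $I \ot \bc$ is the sum of the standard representation $S$ of the first factor and its conjugate $\bar{S}$; none of the exterior powers of $S \oplus \bar{S}$ contains the adjoint of either factor, so $(\g \ot I^{\w n})^{\g} = 0$. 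Because $I$ is Abelian the differential vanishes on the remaining invariant chains, and therefore $H^{\rm{Lie}}_n(I; \h)^{\g} \simeq (I \ot \Lambda^n(I))^{\g}$; since $I$ acts trivially on these representatives they are automatically $\h$-invariant, so Hypothesis A holds.

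It then remains to match $(I \ot \Lambda^n(I))^{\g}$ against $\Lambda^{n+1}(I)^{\g}$ degree by degree. Using $\Lambda^n(I) \simeq I^* \ot \Lambda^4(I)$ together with the $\g$-invariance of the volume form, these invariants are computed from the standard and conjugate summands of $I \ot \bc$, and for $n = 0, 1, 2$ one gets precisely $\Lambda^{n+1}(I)^{\g}$, so that $K_0 = K_1 = K_2 = 0$. The main obstacle is the top degree $n = 3$. Here $(I \ot \Lambda^3(I))^{\g} \simeq {\rm{End}}_{\g}(I)$, and since $I = \bc^2$ is irreducible over $\bc$ its commutant is $\bc$, a two-dimensional real space (the identity together with the complex structure $J$), while $\Lambda^4(I)^{\g}$ is spanned only by $\p_1 \w \p_2 \w \p_3 \w \p_4$. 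Thus $H^{\rm{Lie}}_3(I; \h)^{\g}$ is two-dimensional, whereas by Lemma \eqref{2.1}---with $H^{\rm{Lie}}_*(\g)$ of Poincar\'e series $(1 + t^3)^2$ because $\g \ot \bc \simeq \sln_2(\bc) \oplus \sln_2(\bc)$---the group $H^{\rm{Lie}}_4(\h)$ reduces to its $\Lambda^4(I)^{\g}$ summand and is one-dimensional. The map defining $K_3$ then runs from a two-dimensional space into a one-dimensional one, so $\dim K_3 \geq 1$. Settling whether this surviving endomorphism really contributes an extra degree-$3$ generator to $T(K_*)$---a complex-type phenomenon with no analogue for $\sln_n(\br)$, where the commutant is only $\br$---is the crux on which the stated form $HL_*(\h) \simeq \Lambda^*(I)^{\g}$ depends, and is the step I would examine most carefully.
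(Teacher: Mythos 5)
Your computations are correct, and they follow exactly the route the paper intends: reduce to showing $T(K_*)=\br$ via Theorem \ref{main-theorem}, kill $(\g \ot I^{\w n})^{\g}$ by complexifying, and identify $H^{\rm{Lie}}_n(I; \, \h)^{\g}$ with $(I \ot \Lambda^n(I))^{\g}$. But the obstruction you flag at $n=3$ is not a step you failed to finish; it is a genuine error in the paper. The paper's entire argument for the tensor factor is the sentence ``there are no non-trivial $\frak{sl}_2(\bc)$-module maps from $\frak{sl}_2(\bc)$ to $I^{\w k}$, thus $T(K_*) = \br$ in this case as well.'' That inference rests on Lemma \ref{alpha-inv}, which only controls elements of $K_n$ represented in $(\g \ot I^{\w n})^{\g}$; it says nothing about representatives in $(I \ot I^{\w n})^{\g}$. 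For $\frak{sl}_n(\br)$ the inference happens to be valid because ${\rm{End}}_{\g}(I) \simeq \br$, so $(I \ot \Lambda^{n-1}(I))^{\g}$ is one-dimensional and the wedge map carries it isomorphically onto $\Lambda^{n}(I)^{\g}$. For $\frak{sl}_2(\bc)$ it fails for precisely the reason you identify: ${\rm{End}}_{\g}(I) \simeq \bc$, and under $(I \ot \Lambda^3(I))^{\g} \simeq {\rm{End}}_{\g}(I)$ the wedge map to $\Lambda^4(I)^{\g}$ becomes (a multiple of) the trace, so the traceless element, the complex structure $J$, spans a one-dimensional kernel. Writing $\p_i$ for $\frac{\p}{\p x^i}$, the corresponding class is represented by the $\g$-invariant cycle
$$ \om_J = \p_1 \ot \p_1 \w \p_3 \w \p_4 \, + \, \p_2 \ot \p_2 \w \p_3 \w \p_4 \, + \, \p_3 \ot \p_1 \w \p_2 \w \p_3 \, + \, \p_4 \ot \p_1 \w \p_2 \w \p_4 , $$
which lies in $(I \ot \Lambda^3(I))^{\g}$ and wedges to zero, so $K_3 \neq 0$ and your inequality $\dim K_3 \geq 1$ is in fact an equality.

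Moreover, the conclusion of the corollary, not merely its proof, is inconsistent with the rest of the paper, so no repair of your final step is possible. Since $\om_J$ lies in $I \ot \Lambda^3(I)$, it is automatically $\h$-invariant (the paper itself notes this for such representatives), so Hypothesis A holds and Theorem \ref{main-theorem} would give $HL_*(\h) \simeq [\Lambda^*(I)]^{\g} \ot T(K_*)$ with $K_*$ one-dimensional, not $[\Lambda^*(I)]^{\g}$ alone. Independently of any spectral-sequence argument, Lemma \ref{3.1} injects $H^{\rm{Lie}}_3(I; \, \h)^{\g}$, which your count correctly shows is two-dimensional, into $HL_4(\h)$, whereas the stated answer is only one-dimensional in degree four (the volume form). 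If Theorem \ref{main-theorem} is correct, the statement should instead read, in parallel with the $\frak{so}_n(\br)$ corollary, $HL_*(\h) \simeq [\Lambda^*(I)]^{\frak{sl}_2(\bc)} \ot T(W)$, where $W$ is the one-dimensional space spanned by the degree-four class $\epsilon(\om_J)$; the ``complex-type phenomenon'' you describe is exactly what the published proof overlooks. In short: your analysis is sound and more careful than the paper's own; the step you singled out to examine is the step at which the paper is wrong.
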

\begin{proof}
Again, there are no non-trivial $\frak{sl}_2(\bc)$-module
maps from $\frak{sl}_2(\bc)$ to $I^{\w k}$.  Thus, $T(K_*) = \br$ in this 
case as well.  The reader may check that 
$[\Lambda^2(I)]^{\frak{sl}_2(\bc)}$ has a real vector space basis given
by the two elements:  
$$ \frac{\p}{\p x^1} \w \frac{\p}{\p x^3} - \frac{\p}{\p x^2} \w \frac{\p}{\p x^4},
\ \ \ \ \ \frac{\p}{\p x^1} \w \frac{\p}{\p x^4} + 
\frac{\p}{\p x^2} \w \frac{\p}{\p x^3}.  $$
Furthermore, $[\Lambda^4(I)]^{\frak{sl}_2(\bc)}$ is a one-dimensional
(real) vector space on the volume element 
$$ \frac{\p}{\p x^1} \w \frac{\p}{\p x^2} \w \frac{\p}{\p x^3} \w 
\frac{\p}{\p x^4}.  $$
For $k = 1$, 3, we have $[\Lambda^k(I)]^{\frak{sl}_2(\bc)} = 0$.  
\end{proof}

\begin{corollary} \cite{Biyogmam} 
Let $\g = \frak{so}_n(\br)$, $n \geq 3$, and
$$ \al_{ij} = x_i \frac{\p}{\p x^j} - x_j \frac{\p}{\p x^i}
\in \frak{so}_n(\br), \ \ \ 1 \leq i < j \leq n .  $$
For $I = \br^n$ the standard representation of $\frak{so}_n(\br)$, 
we have
$$  HL_*(\h) \simeq [\Lambda^*(I)]^{\frak{so}_n(\br)} \ot T(W),  $$
where $W$ is the one-dimensional vector space with $\h$-invariant
basis element
\begin{align*}
& \om = \sum_{\sigma \in {\rm{Sh}}_{2, \, n-2}} {\rm{sgn}}(\sigma)
\, \al_{\sigma(1) \, \sigma(2)} \ot \epsilon \Big(
\frac{\p}{\p x^{\sigma(3)}} \w \frac{\p}{\p x^{\sigma(4)}} \w \ldots
\w \frac{\p}{\p x^{\sigma(n)}} \Big) \\
& + (-1)^{n+1} \sum_{\sigma \in {\rm{Sh}}_{n-2, \, 2}} {\rm{sgn}}(\sigma)
\, \epsilon \Big(
\frac{\p}{\p x^{\sigma(1)}} \w \frac{\p}{\p x^{\sigma(2)}} \w \ldots
\w \frac{\p}{\p x^{\sigma(n-2)}} \Big) \ot \al_{\sigma(n-1) \, \sigma(n)},
\end{align*} 
and $\epsilon : I^{\w (n-2)} \to I^{\ot (n-2)} \hookrightarrow
\h^{\ot (n-2)}$ is the skew-symmetrization map.  Above, 
${\rm{Sh}}_{p, \, q}$ denotes the set of $p$, $q$ shuffles in
the symmetric group $S_{(p+q)}$.
\end{corollary}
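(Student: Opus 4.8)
The plan is to obtain this as a direct application of Theorem~\ref{main-theorem}, so that the entire content reduces to two invariant-theoretic computations plus a verification of Hypothesis~A. Since the theorem gives $HL_*(\h)\simeq\Lambda^*(I)^{\g}\ot T(K_*)$, I would first settle (i) the exterior-algebra invariants $[\Lambda^*(I)]^{\so_n(\br)}$ and then (ii) the space $K_*$ together with an explicit $\h$-invariant generator. For (i) I would invoke the classical fact that the only $SO(n)$-invariants in the exterior algebra of the standard representation are the scalars and the volume form, so $[\Lambda^*(I)]^{\so_n(\br)}=\br\oplus\br\cdot(\p/\p x^1\w\ldots\w\p/\p x^n)$, concentrated in degrees $0$ and $n$; for $n$ even one notes that the self-dual and anti-self-dual halves of $\Lambda^{n/2}(I)$ contain no trivial summand, so no further invariants arise.

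The heart of the matter is (ii). I would start from the $\g$-module isomorphisms $\g=\so_n(\br)\simeq\Lambda^2(I)$ (the adjoint representation is the second exterior power of the standard one) together with Hodge duality $\Lambda^2(I)\simeq\Lambda^{n-2}(I)$. By Schur's lemma and the decomposition of each $\Lambda^m(\br^n)$ into $SO(n)$-irreducibles, $\mathrm{Hom}_{\g}(\g,\Lambda^m(I))$ is one-dimensional for $m=2$ and $m=n-2$ and vanishes otherwise, and by Lemma~\ref{alpha-inv} these are the only maps that can contribute a $(\g\ot I^{\w m})^{\g}$-class to $K_m$. To decide which of the two survives I would present $H^{\rm{Lie}}_*(I;\h)^{\g}$ by its chain complex: since $\h\simeq\g\oplus I$ as $\g$-modules and $I$ acts on $\h$ by carrying $\g$ into $I$ and annihilating $I$, the only nonzero component of the boundary is a $\g$-equivariant map $\phi_m:(\g\ot\Lambda^m(I))^{\g}\to(I\ot\Lambda^{m-1}(I))^{\g}$, whence $H^{\rm{Lie}}_m(I;\h)^{\g}\simeq\ker\phi_m\oplus\mathrm{coker}\,\phi_{m+1}$ by exactness of $\g$-invariants. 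A short calculation on the generator $\sum_{i<j}\al_{ij}\ot(\p/\p x^i\w\p/\p x^j)$ shows $\phi_2$ to be a nonzero multiple of $\sum_k(\p/\p x^k\ot\p/\p x^k)$, so the degree-$2$ copy of $\g$ is not a cycle and $K_2=0$; by contrast $\phi_{n-2}$ lands in $(I\ot\Lambda^{n-3}(I))^{\g}=0$, so the degree-$(n-2)$ copy is a cycle. Combined with the fact that $[\Lambda^{m+1}(I)]^{\g}$ contributes only the volume class (in degree $n-1$), this yields $K_*\simeq W$, one-dimensional and concentrated in degree $n-2$. The bookkeeping is uniform for $n\geq 5$; the cases $n=3$ (where $n-2=1$ and $\g\simeq I$) and $n=4$ (where $\so_4$ is only semisimple) would be treated by the same count with the small-dimensional identifications inserted by hand.

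Finally I must verify Hypothesis~A, namely that the generator of $K_{n-2}$ has an $\h$-invariant representative, and this is precisely the role of $\om$. Its first summand is $\epsilon$ applied to the Hodge-dual invariant $\g\simeq\Lambda^{n-2}(I)$, which is automatically $\g$-invariant and a Leibniz cycle but only $\g$-invariant; the second summand, carrying the sign $(-1)^{n+1}$ from Hodge duality with $\al$ placed in the last tensor slot, is the correction that upgrades $\g$-invariance to $\h$-invariance, in exact analogy with the balanced-tensor construction of the lemma preceding Section~4. I would check directly that $[\om,a]=0$ for all $a\in I$ and that $\om$ represents a nonzero class in $HL_{n-1}(\h)$, after which Theorem~\ref{main-theorem} delivers $HL_*(\h)\simeq[\Lambda^*(I)]^{\so_n(\br)}\ot T(W)$. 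I expect the main obstacle to be exactly this last verification: neither summand of $\om$ is individually $\h$-invariant, so the delicate point is that their Leibniz boundaries and their failures of $I$-invariance cancel, which is the concrete counterpart of separating the two copies of $\g$ in $\Lambda^*(I)$ and showing that the surviving one lifts to an honest $\h$-invariant cycle.
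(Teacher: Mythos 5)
Your proposal is correct and follows the same overall strategy as the paper's proof: reduce to Theorem~\ref{main-theorem}, locate the two copies of $\g$ inside $\Lambda^*(I)$ (degrees $2$ and $n-2$, via $\g \simeq \Lambda^2(I)$ and Hodge duality), show the degree-$2$ copy dies while the degree-$(n-2)$ copy survives as the generator of $K_*$, and realize Hypothesis~A through the balanced element $\om$. The genuine difference is in how $K_*$ is pinned down. You work in the invariant Lie subcomplex $(\h \ot \Lambda^*(I))^{\g}$, observing that the only nonzero differential component is $\phi_m : (\g \ot \Lambda^m(I))^{\g} \to (I \ot \Lambda^{m-1}(I))^{\g}$, so that $H^{\rm{Lie}}_m(I; \, \h)^{\g} \simeq \ker \phi_m \oplus \mathrm{coker}\, \phi_{m+1}$; this makes the survival of the degree-$(n-2)$ class automatic (for $n \geq 5$ its target $(I \ot \Lambda^{n-3}(I))^{\g}$ vanishes), shows at a glance that nothing else contributes, and honestly surfaces the $n=3$ and $n=4$ anomalies ($\g \simeq I$, resp.\ $\so_4(\br)$ not simple) that the paper passes over silently. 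The paper instead checks cycle conditions directly in the Leibniz complex: $\sum_{i<j} \al_{ij} \ot \epsilon\big(\frac{\p}{\p x^i} \w \frac{\p}{\p x^j}\big)$ is not a cycle, while $\lambda$ (your degree-$(n-2)$ class pushed through $\epsilon$) is. What the paper supplies and you only announce is the final step: it exhibits an explicit chain $\gamma$ with $d(\gamma) = (n-2)\beta$, where $\beta$ is the second summand of $\om$ (including the sign $(-1)^{n+1}$), so $\beta$ is a boundary, $\om = \lambda + \beta$ is homologous to $\lambda$, and hence represents the nonzero generator of $K_{n-2}$ in $HL_{n-1}(\h)$, with $\h$-invariance following the balanced-tensor pattern exactly as you predicted. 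This also corrects a small misstatement at the end of your proposal: both summands of $\om$ are individually Leibniz cycles ($\beta$ is even a boundary), so there are no boundaries to cancel; what must (and does) cancel is only the failure of $I$-invariance of the two summands, which is precisely the balanced-tensor phenomenon of the lemma you cite.
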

\begin{proof}
There are two non-trivial $\frak{so}_n(\br)$-module maps
$\frak{so}_n(\br) \to I^{\w k}$ to consider
$$  \rho_1 : \frak{so}_n(\br) \to I^{\w 2}, \ \ \ \ \ 
\rho_2 : \frak{so}_n(\br) \to I^{\w (n-2)},  $$
given by
\begin{align*}
& \rho_1 (\al_{ij}) = \frac{\p}{\p x^i} \w \frac{\p}{\p x^j}, \\
& \rho_2 (\al_{ij}) = {\rm{sgn}}(\tau) \,
\frac{\p}{\p x^1} \w \frac{\p}{\p x^2} \w \ldots
\hat{\frac{\p}{\p x^i}} \ldots \hat{\frac{\p}{\p x^j}} \ldots
\w \frac{\p}{\p x^n},
\end{align*}
where $\tau$ is the permutation sending
$$ 1, \ 2, \, \ldots \, ,\, i, \ldots\, , \, j, \, \ldots \, , \, n \ \ \ 
{\rm{to}} \ \ \ i, \ j, \ 1, \ 2, \, \ldots \, , \, n. $$
Now, $\sum_{i<j} \al_{ij} \ot \epsilon (\frac{\p}{\p x^i} \w 
\frac{\p}{\p x^j})$ is not a cycle in the Leibniz complex, while
$$ \lambda = \sum_{\sigma \in {\rm{Sh}}_{2, \, n-2}} {\rm{sgn}}(\sigma)
\, \al_{\sigma(1) \, \sigma(2)} \ot \epsilon \Big(
\frac{\p}{\p x^{\sigma(3)}} \w \frac{\p}{\p x^{\sigma(4)}} \w \ldots
\w \frac{\p}{\p x^{\sigma(n)}} \Big)  $$
is a cycle in $\h^{\ot (n-1)}$, and $\om$ above is a homologous $\h$-invariant
cycle.  For $\sigma \in S_n$, let
\begin{align*}
& a(\sigma) = \sum_{i=1}^{n-2} \al_{\sigma(i) \, \sigma(n-1)} \ot
\al_{\sigma(i) \, \sigma(n)}, \\
& \gamma = \sum_{\sigma \in {\rm{Sh}}_{n-2, \, 2}} {\rm{sgn}}(\sigma)
\, \epsilon \Big(
\frac{\p}{\p x^{\sigma(1)}} \w \frac{\p}{\p x^{\sigma(2)}} \w \ldots
\w \frac{\p}{\p x^{\sigma(n-2)}} \Big) \ot a(\sigma).  
\end{align*}
Then in the Leibniz complex,
\begin{align*}
& d(\gamma) = (n-2) \beta, \\
& \beta = (-1)^{n+1} \sum_{\sigma \in {\rm{Sh}}_{n-2, \, 2}} {\rm{sgn}}(\sigma)
\, \epsilon \Big(
\frac{\p}{\p x^{\sigma(1)}} \w \frac{\p}{\p x^{\sigma(2)}} \w \ldots
\w \frac{\p}{\p x^{\sigma(n-2)}} \Big) \ot \al_{\sigma(n-1) \, \sigma(n)}.
\end{align*}
Thus, $\om$ and $\lambda$ are homologous in $HL_*$.   From 
\cite{Biyogmam},
$$  [\Lambda^*(I)]^{\frak{so}_n(\br)} = 
\big\langle \frac{\p}{\p x^1} \w \frac{\p}{\p x^2} \w \, \ldots \, \w 
\frac{\p}{\p x^n} \big\rangle .  $$
\end{proof}

\begin{corollary}
Let $\g = \frak{so}(3, \, 1)$ and
\begin{align*}
& \al_{ij} = x_i \frac{\p}{\p x^j} - x_j \frac{\p}{\p x^i}
\in \frak{so}(3, \, 1), \ \ \ 1 \leq i < j \leq 3, \\
& \be_{ij} =   x_i \frac{\p}{\p x^j} + x_j \frac{\p}{\p x^i}
\in \frak{so}(3, \, 1), \ \ \ i = 1, \ 2, \ 3, \ \, j = 4.
\end{align*}
For $I = \br^4$ the standard representation of $\frak{so}(3, \, 1)$,
we have
$$  HL_*(\h) \simeq [\Lambda^*(I)]^{\frak{so}(3, \, 1)} \ot T(W),  $$
where $W$ is the one-dimensional vector space with $\h$-invariant
basis element
\begin{align*}
\om =  \, & \al_{12}\ot \Big(\frac{\p}{\p x^3} \w \frac{\p}{\p x^4}\Big)
- \al_{13}\ot \Big(\frac{\p}{\p x^2} \w \frac{\p}{\p x^4}\Big)
+ \al_{23}\ot \Big(\frac{\p}{\p x^1} \w \frac{\p}{\p x^4}\Big) \\
& + \be_{14}\ot \Big(\frac{\p}{\p x^2} \w \frac{\p}{\p x^3}\Big)
- \be_{24}\ot \Big(\frac{\p}{\p x^1} \w \frac{\p}{\p x^3}\Big)
+ \be_{34}\ot \Big(\frac{\p}{\p x^1} \w \frac{\p}{\p x^2}\Big) \\
& - \Big(\frac{\p}{\p x^3} \w \frac{\p}{\p x^4}\Big) \ot \al_{12}
+ \Big(\frac{\p}{\p x^2} \w \frac{\p}{\p x^4}\Big) \ot \al_{13}
- \Big(\frac{\p}{\p x^1} \w \frac{\p}{\p x^4}\Big) \ot \al_{23} \\
& - \Big(\frac{\p}{\p x^2} \w \frac{\p}{\p x^3}\Big) \ot \be_{14}
+ \Big(\frac{\p}{\p x^1} \w \frac{\p}{\p x^3}\Big) \ot \be_{24}
- \Big(\frac{\p}{\p x^1} \w \frac{\p}{\p x^2}\Big) \ot \be_{34}.
\end{align*}
\end{corollary}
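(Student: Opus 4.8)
The plan is to apply Theorem \ref{main-theorem}, so the work reduces to three tasks: identifying the invariant exterior algebra $[\Lambda^*(I)]^{\so(3,\,1)}$, determining $T(K_*)$, and verifying Hypothesis A by producing an $\h$-invariant chain representative for the nontrivial class in $K_*$. First I would record that $\so(3,\,1)$ is simple as a real Lie algebra (indeed $\so(3,\,1) \simeq \sln_2(\bc)$ viewed over $\br$), so that Lemma \eqref{2.1}, Lemma \eqref{3.2} and Theorem \eqref{main-theorem} are all available. Since the standard module $I = \br^4$ carries a nondegenerate symmetric form of signature $(3,\,1)$, the $\g$-modules $\Lambda^1(I) = I$, $\Lambda^2(I) \simeq \g$, and $\Lambda^3(I) \simeq I$ are nontrivial irreducibles, while $\Lambda^0(I)$ and $\Lambda^4(I)$ are trivial. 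Hence $[\Lambda^*(I)]^{\so(3,\,1)}$ is spanned by the scalar in degree $0$ and the volume form $\frac{\p}{\p x^1} \w \frac{\p}{\p x^2} \w \frac{\p}{\p x^3} \w \frac{\p}{\p x^4}$ in degree $4$, exactly as in the preceding corollary for $\so_n(\br)$.

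Next I would compute $K_*$ through the decomposition $(\h \ot I^{\w n})^{\g} \simeq (\g \ot I^{\w n})^{\g} \oplus (I \ot I^{\w n})^{\g}$ together with Lemma \eqref{alpha-inv}, which reduces the $\g$-summand to the module maps $\al : \g \to I^{\w n}$. The decisive point specific to dimension $4$ is that the two maps $\rho_1 : \g \to \Lambda^2(I)$ and $\rho_2 : \g \to \Lambda^{4-2}(I) = \Lambda^2(I)$ of the $\so_n$ analysis now share the same target, so that $(\g \ot \Lambda^2(I))^{\g} \simeq {\rm{Hom}}_{\g}(\g,\,\Lambda^2(I)) \simeq {\rm{End}}_{\g}(\g)$ is two-dimensional over $\br$ (it is a copy of $\bc$, reflecting the complex structure of $\sln_2(\bc)$). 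Assembling the invariant complex, whose only nonzero terms are $(I \ot I)^{\g}$ in degree $1$, the two-dimensional $(\g \ot \Lambda^2(I))^{\g}$ in degree $2$, and the volume contribution $(I \ot \Lambda^3(I))^{\g}$ in degree $3$, I would show that the single differential $d : (\g \ot \Lambda^2(I))^{\g} \to (I \ot I)^{\g}$ has rank exactly one. By Lemma \eqref{3.2} this forces $K_2 \simeq \ker d$ to be one-dimensional, identifies $K_1 \simeq (I \ot I)^{\g}/{\rm{im}}\, d = 0$ (the symmetric, suitably signed ``metric'' tensor $\sum_i \frac{\p}{\p x^i} \ot \frac{\p}{\p x^i}$ is a boundary rather than a new generator), and gives $K_n = 0$ for all other $n$; thus $T(K_*) = T(W)$ with $W = K_2$ one-dimensional.

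It then remains to exhibit a canonical generator of $W = K_2$ and to verify Hypothesis A by showing it is represented by an $\h$-invariant cycle in $HL_3(\h)$. Following the template of the $\so_n(\br)$ argument, I would start from the shuffle-symmetrized cycle built from $\rho_2$ and correct it to the symmetric expression $\om$ displayed in the statement, in which the rotation generators $\al_{ij}$ and the boost generators $\be_{ij}$ appear with signs dictated by the signature $(3,\,1)$ and the Hodge pairing on $\Lambda^2(\br^4)$. The two blocks of $\om$ --- the terms with an element of $\g$ in the first tensor slot and the terms with it in the last --- are arranged precisely so that the sum is both a Leibniz cycle (one checks $d_{\rm{Lieb}}\,\om = 0$ directly, the bracket terms cancelling in pairs) and annihilated by $[\,\cdot\,,\,a]$ for every $a \in I$, i.e.\ $\h$-invariant; the $\g$-invariance is automatic from the shuffle construction. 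A short chain $\gamma$, analogous to the one with $d(\gamma) = (n-2)\beta$ in the $\so_n$ proof, would then show $\om$ homologous to the naive $\rho_2$-cycle, hence a generator of $K_2$.

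The main obstacle I anticipate is the rank-one claim for $d$ in the second step: showing that precisely a one-dimensional subspace of the two-dimensional $(\g \ot \Lambda^2(I))^{\g}$ consists of cycles, while the complementary direction maps onto the full metric line in $(I \ot I)^{\g}$. This is where the distinction between the compact generators $\al_{ij}$ and the noncompact generators $\be_{ij}$, together with the indefinite signature, genuinely enters, and it is the same computation that both produces the correct one-dimensional $W$ and kills the spurious degree-one class. The identical signature bookkeeping reappears when checking that $\om$ is $\h$-invariant, so I would organize both verifications around the explicit bracket relations $[\al_{ij},\,\frac{\p}{\p x^k}]$ and $[\be_{ij},\,\frac{\p}{\p x^k}]$. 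Once these are in hand, Theorem \eqref{main-theorem} yields $HL_*(\h) \simeq [\Lambda^*(I)]^{\so(3,\,1)} \ot T(W)$ with $W$ one-dimensional on $[\om]$, as claimed.
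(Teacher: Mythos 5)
Your proposal is correct and follows essentially the same route as the paper: the paper's own (very terse) proof likewise identifies the $\so(3,\,1)$-module maps $\rho : \so(3,\,1) \to I^{\w k}$ (both of which land in $\Lambda^2(I)$ when $n=4$) and constructs the $\h$-invariant representative $\om$ as a balanced tensor, noting that the indefinite Killing form of $\so(3,\,1)$ changes the sign bookkeeping relative to $\so_4(\br)$. Your extra detail --- that $(\g \ot \Lambda^2(I))^{\g} \simeq \mathrm{End}_{\g}(\g) \simeq \bc$ is two-dimensional, with the differential of rank one killing the degree-one (metric) class and leaving $K_2$ one-dimensional, spanned by the Hodge-type invariant --- is exactly the computation the paper leaves implicit, and it checks out.
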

\begin{proof}
The proof follows from identifying $\frak{so}(3, \, 1)$ module 
maps $\rho : \frak{so}(3, \, 1) \to I^k$ and constructing
$\h$-invariants via balanced tensors.  Note that the Killing
form to establish $\g \simeq \g^*$ is different for
$\frak{so}_4(\br)$ and $\frak{so}(3, \, 1)$.
\end{proof}

\begin{corollary} \cite{Lodder2}
Let $\g = \frak{sp}_n(\br)$ be the real symplectic Lie algebra with
vector space basis given by the families:
\begin{itemize}
\item[(1)]  $x_k \frac{\p}{\p y^k}$, $\ k = 1, \ 2, \ 3, \ \ldots, \ n$,
\item[(2)]  $y_k \frac{\p}{\p x^k}$, $\ k = 1, \ 2, \ 3, \ \ldots, \ n$,
\item[(3)]  $x_i \frac{\p}{\p y^j} + x_j \frac{\p}{\p y^i}$, $\ 1 \leq i
< j \leq n$,
\item[(4)]  $y_i \frac{\p}{\p x^j} + y_j \frac{\p}{\p x^i}$, $\ 1 \leq i
< j \leq n$,
\item[(5)]  $y_j \frac{\p}{\p y^i} - x_i \frac{\p}{\p x^j}$, $\ i = 1, \
2, \ 3, \ \ldots, \ n$, $\ j = 1, \ 2, \ 3, \ \ldots, \ n$.
\end{itemize}
Let $I = \br^{2n}$ have basis
$$ \frac{\p}{\p x^1}, \ \frac{\p}{\p x^2}, \ \ldots, \ 
\frac{\p}{\p x^n} , \ \frac{\p}{\p y^1}, \ \frac{\p}{\p y^2}, \ \ldots, \ 
\frac{\p}{\p y^n} .  $$
Then
$$  HL_*(\h) \simeq [\Lambda^*(I)]^{\frak{sp}_n} = \Lambda^*(\om_n), $$
where $\om_n = \sum_{i=1}^n \frac{\p}{\p x^i} \w \frac{\p}{\p y^i}$.  
\end{corollary}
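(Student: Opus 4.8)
The plan is to apply Theorem \ref{main-theorem}, which reduces the computation of $HL_*(\h)$ to two pieces: the invariant exterior algebra $[\Lambda^*(I)]^{\s_n}$ and the graded space $K_*$. First I would record the classical first fundamental theorem of invariant theory for the symplectic group, namely that the only $\s_n(\br)$-invariants in $\Lambda^*(\br^{2n})$ are the powers of the symplectic form. This gives $[\Lambda^*(I)]^{\s_n} = \Lambda^*(\om_n)$ with $\om_n = \sum_{i=1}^n \frac{\p}{\p x^i} \w \frac{\p}{\p y^i}$; in particular $[\Lambda^k(I)]^{\s_n}$ is one-dimensional for $k$ even and zero for $k$ odd. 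It then remains to prove $K_* = 0$, so that $T(K_*) = \br$ and the theorem yields the claimed answer.

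The structural input that distinguishes this case is that, as $\s_n(\br)$-modules, the adjoint representation is isomorphic to the symmetric square $S^2(I)$ of the standard representation (a dimension count, $\dim \s_n(\br) = n(2n+1) = \dim S^2(I)$, makes this precise), in contrast to the orthogonal case where the adjoint module is $\Lambda^2(I)$ and does occur in exterior powers. Following Lemma \ref{alpha-inv}, a nonzero class in $K_k$ with a representative in $(\g \ot I^{\w k})^{\g}$ would furnish an injective $\s_n(\br)$-module map $\al : \s_n(\br) \to \Lambda^k(I)$. But $S^2(I)$ occurs in no $\Lambda^k(I)$: after removing symplectic traces, $\Lambda^k(I)$ is built from the primitive summands $\Lambda^{k-2j}_0(I)$, none of which is isomorphic to $S^2(I)$. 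Hence $\mathrm{Hom}_{\s_n}(\s_n(\br), \Lambda^k(I)) = 0$, equivalently $[\g \ot \Lambda^k(I)]^{\g} = 0$ for all $k$.

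Because $I$ is abelian, the differential on the complex computing $H^{\rm{Lie}}_*(I;\h)$ kills the summand $I \ot \Lambda^*(I)$ and carries $\g \ot \Lambda^{k+1}(I)$ into $I \ot \Lambda^k(I)$. Taking $\g$-invariants is exact, and since $[\g \ot \Lambda^k(I)]^{\g}=0$ the $\g$-coefficient summand contributes neither invariant cycles nor invariant boundaries, so $H^{\rm{Lie}}_k(I;\h)^{\g} \simeq [I \ot \Lambda^k(I)]^{\g}$. Under this identification the map into $H^{\rm{Lie}}_{k+1}(\h)$ defining $K_k$ becomes the exterior-multiplication map $\mu : I \ot \Lambda^k(I) \to \Lambda^{k+1}(I)$, whose restriction to invariants is split surjective via the map $\zeta$ of Theorem \ref{2.2} (since $\pi' \circ \zeta$ is the identity). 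By Lemma \ref{3.2}, $K_k$ is the invariant part of $\ker \mu$. The decisive step is a dimension count: using $I \simeq I^*$ and $\Lambda^k(I) \simeq \bigoplus_{j\ge 0}\Lambda^{k-2j}_0(I)$, the standard module $I$ appears in $\Lambda^k(I)$ exactly once when $k$ is odd and not at all when $k$ is even, so $\dim[I \ot \Lambda^k(I)]^{\g} = \dim\mathrm{Hom}_{\s_n}(I,\Lambda^k(I))$ equals $\dim[\Lambda^{k+1}(I)]^{\s_n}$ for every $k$. With $\mu$ surjective on invariants and the two invariant spaces of equal dimension, $\mu$ is an isomorphism on invariants, forcing $K_k = 0$.

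With $K_* = 0$, Hypothesis A holds vacuously, $T(K_*) = \br$, and Theorem \ref{main-theorem} gives $HL_*(\h) \simeq [\Lambda^*(I)]^{\s_n} = \Lambda^*(\om_n)$. The hard part will be the symplectic representation theory underlying the two vanishing statements: that $S^2(I)$ is absent from every $\Lambda^k(I)$, and that the multiplicity of $I$ in $\Lambda^k(I)$ matches the invariants of $\Lambda^{k+1}(I)$ degree by degree. Once these are in hand, the remainder follows the formal pattern already used for $\s_n$ and $\so_n$ in the preceding corollaries.
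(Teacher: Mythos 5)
Your proposal is correct and follows essentially the same route as the paper: both apply Theorem \ref{main-theorem}, reduce the problem to showing there are no nontrivial $\frak{sp}_n(\br)$-module maps $\frak{sp}_n(\br) \to I^{\wedge k}$ (hence $T(K_*) = \br$), and invoke symplectic invariant theory for $[\Lambda^*(I)]^{\frak{sp}_n} = \Lambda^*(\om_n)$. The only difference is one of completeness: the paper asserts the absence of module maps and the conclusion $T(K_*)=\br$ in a single sentence (citing \cite{Lodder2} for the invariants), whereas you supply the underlying representation theory --- the identification of the adjoint module with $S^2(I)$, its absence from the Lefschetz decomposition of $\Lambda^k(I)$, and the dimension count showing the multiplication map $\mu : [I \ot \Lambda^k(I)]^{\g} \to [\Lambda^{k+1}(I)]^{\g}$ is an isomorphism, which is exactly the step needed to pass from ``no module maps'' to $K_* = 0$.
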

\begin{proof}
Since there are no non-trivial $\frak{sp}_n(\br)$-module maps
$\frak{sp}_n(\br) \to I^{\w k}$, we have $T(K_*) = \br$.  The algebra
of symplectic invariants $[\Lambda^*(I)]^{\frak{sp}_n}$ is identified
in another paper \cite{Lodder2}.  
\end{proof}


\begin{thebibliography}{99}

\bibitem{Biyogmam} Biyogmam, G. R., ``On the Leibniz (Co)homology 
of the Lie Algebra of the Euclidean Group,'' {\em Journal of Pure
and Applied Algebra}, {\bf 215} (2011), 1889--1901.

\bibitem{Hilton} Hilton, P.J., Stammbach, U., {\em A Course in
Homological Algebra}, Springer Verlag, New York, 1971.

\bibitem{HS} Hochschild, G., Serre, J-P., ``Cohomology of Lie
Algebras,'' {\em Annals of Math.}, {\bf 57}, 3 (1953), 591--603.  

\bibitem{Koszul} Koszul, J.-L. ``Homologie et cohomologie des
alg\`ebres de Lie,'' {\em Bulletin de la Soci\'et\'e de France},
{\bf 78} (1950), 65--127. 

\bibitem{LP} Loday, J-L., Pirashvili, T., ``Universal enveloping
algebras of Leibniz algebras and (co)-homology,'' {\em Math. Annalen},
{\bf 296}, 1 (1993), 139--158.

\bibitem{Loday1996} Loday, J-L., ``K\"unneth-style Formula for the 
Homology of Leibniz Algebras,'' {\em Math. Zeitschrift}, {\bf 221}
(1996), 41--47.

%\bibitem{Lodder} Lodder, J., ``Leibniz cohomology for differentiable
%manifolds,'' {\em Annales de  l'Institut Fourier}, Grenoble, {\bf 48}, 1 (1998),
%73--95.

\bibitem{Lodder2} Lodder, J., ``Lie Algebras of Hamiltonian Vector
Fields and Symplectic Manifolds,'' {\em Journal of Lie Theory}, {\bf
18} (2008) 897--914.

\bibitem{Ntolo} Ntolo, P., ``Homologie de Leibniz d'alg\`ebres de Lie
semi-simple,'' {\em Comptes rendus de l'Acad\'emie des sciences}
Paris, S\'erie I, {\bf{318}} (1994), 707--710.

\bibitem{Pirashvili} Pirashvili, T., ``On Leibniz Homology,''
{\em Annales de l'Institut Fourier}, Grenoble {\bf 44}, 2 (1994), 401--411.

%\bibitem{Whitehead} Whitehead, G.W., {\em Elements of Homotopy
%Theory}, Springer Verlag, New York, 1978.    

\end{thebibliography}
\end{document}